\begin{document}
\title [Rickart Modules Relative to  Goldie Torsion Theory]
{Rickart Modules Relative to  Goldie Torsion Theory}

\author{Burcu Ungor}
\address{Burcu Ungor, Department of Mathematics, Ankara University, Turkey}
\email{bungor@science.ankara.edu.tr}

\author{Sait Hal\i c\i oglu}
\address{Sait Hal\i c\i oglu,  Department of Mathematics, Ankara University, Turkey}
\email{halici@ankara.edu.tr}

\author{Abdullah Harmanci}
\address{Abdullah Harmanci, Department of Mathematics, Hacettepe University, Turkey}
\email{harmanci@hacettepe.edu.tr}

\date{}

\newtheorem {thm}{Theorem}[section]
\newtheorem{lem}[thm]{Lemma}
\newtheorem{prop}[thm]{Proposition}
\newtheorem{cor}[thm]{Corollary}
\newtheorem{df}[thm]{Definition}
\newtheorem{nota}{Notation}
\newtheorem{note}[thm]{Remark}
\newtheorem{ex}[thm]{Example}
\newtheorem{exs}[thm]{Examples}
\newtheorem{rem}[thm]{Remark}
\newtheorem{quo}[thm]{Question}

\begin{abstract}  Let $R$ be an arbitrary ring with
identity and $M$ a right $R$-module with $S=$ End$_R(M)$. Let
$Z_2(M)$ be the second singular submodule of $M$. In this paper,
we define Goldie Rickart modules by utilizing the endomorphisms of
a module. The module $M$ is called Goldie Rickart if for any $f\in
S$, $f^{-1}(Z_2(M))$ is a direct summand of $M$. We provide
several characterizations of Goldie Rickart modules and study
their properties.  Also we present that semisimple rings and right
$\Sigma$-$t$-extending rings admit some characterizations in terms
of Goldie Rickart modules.
\\[+2mm]
 \noindent {\bf 2010 MSC:}  13C99, 16D80, 16U80.

\noindent {\bf Key words:} Rickart module, Goldie Rickart module.
\end{abstract}

\maketitle
\section{ Introduction }
Throughout this paper $R$ denotes a ring with identity, modules
are unital right $R$-modules. Let $M$ be an $R$-module with $S=$
End$_R(M)$. The singular submodule of $M$ is
 $Z(M) = \{m\in M\mid mI = 0$ for some essential
right ideal $I$ of $R$ $\}$.  If $M=Z(M)$, then $M$ is called {\it
singular} and $M$ is {\it nonsingular} provided $Z(M) = 0$. The
second singular submodule,  in other words, the Goldie torsion
submodule  $Z_2(M)$ of $M$ is defined by $Z(M/Z(M)) =
Z_2(M)/Z(M)$. The module $M$ is called {\it $Z_2$-torsion} (or
{\it Goldie torsion}) if $M = Z_2(M)$. It is evident that every
singular module is $Z_2$-torsion.   If for any $f\in S$, $r_M(f)$
is a direct summand of $M$, then $M$ is said to be a {\it Rickart
module}. Rickart modules are introduced and investigated by Lee,
Rizvi and Roman in \cite{LRR}. Also, right (left) Rickart rings
(or principally projective rings) initially appeared in Maeda
\cite{M}, and were further studied by Hattori \cite{Hat}, that is,
a ring is called {\it right (left) Rickart} if every principal
right (left)  ideal is projective, equivalently, the right
annihilator of any single element is generated by  an idempotent
as a right ideal. The concept of right (left) Rickart rings has
been comprehensively studied in the literature. In \cite{AH},
Asgari and Haghany defined {\it t-Baer modules}, that is, a module
$M$ is called {\it t-Baer} if $t_M(I)$ is a direct summand of $M$
for every left ideal $I$ of $S$ where $t_M(I)=\{m\in M \mid Im\leq
Z_2(M)\}$, and they study properties of t-Baer modules. Note that
for any $f\in S$, $f^{-1}(Z_2(M)) = t_M(Sf)$ and $\bigcap
\limits_{f\in S} f^{-1}(Z_2(M))=Z_2(M)$. Clearly, the kernel
$r_M(f)$ of $f\in S$ is a submodule of $f^{-1}(Z_2(M))$.

In what follows, by $\Bbb Z$, $\Bbb Q$ and $\Bbb Z_n$ we denote,
respectively, integers, rational numbers and  the ring of integers
modulo $n$. For a module $M$, $E(M)$ is the injective hull of $M$
and $S=$ End$_R(M)$ is the ring of endomorphisms of $M$.

 \indent\indent

\section{Goldie Rickart Modules}

In this section we give our main definition, namely Goldie Rickart
modules, and investigate some properties of this class of modules.

\begin{df} A module $M$ is called {\it Goldie Rickart} if $f^{-1}(Z_2(M))$ is a direct summand of
$M$ for every $f\in S$.
\end{df}

It is clear that every semisimple module, every singular module
and every $Z_2$-torsion module is Goldie Rickart. If $R$ is a ring
with $Z_2(R_R)=R$, then every $R$-module $M$ is Goldie Rickart due
to $Z_2(M)=M$. For a nonsingular module $M$, since
$r_M(f)=f^{-1}(Z_2(M))$, $M$ is  Rickart if and only if it is
Goldie Rickart, however these two notions are not equivalent for
any arbitrary module that will be shown later.
\begin{prop} Every indecomposable Goldie Rickart module is
a Rickart or $Z_2$-torsion module.
\end{prop}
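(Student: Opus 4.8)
The plan is to exploit the indecomposability of $M$, which forces every direct summand to be either $0$ or $M$, and then to test the Goldie Rickart condition against a single, well-chosen endomorphism. The natural choice is the identity map $1_M \in S$: since $1_M^{-1}(Z_2(M)) = Z_2(M)$, the Goldie Rickart hypothesis tells us immediately that $Z_2(M)$ is a direct summand of $M$. By indecomposability this pins $Z_2(M)$ down to one of the two extreme cases, $Z_2(M) = 0$ or $Z_2(M) = M$, and I would treat these separately.

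In the case $Z_2(M) = M$ there is nothing left to prove: this is precisely the definition of $M$ being $Z_2$-torsion (Goldie torsion), so one branch of the disjunction holds.

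In the remaining case $Z_2(M) = 0$, I would first observe that $Z(M) \subseteq Z_2(M) = 0$, so $M$ is nonsingular. Then for an arbitrary $f \in S$ the preimage collapses to the kernel, $f^{-1}(Z_2(M)) = f^{-1}(0) = r_M(f)$. Since $M$ is Goldie Rickart, $r_M(f)$ is a direct summand of $M$ for every $f \in S$, which is exactly the definition of a Rickart module. This is the nonsingular equivalence already recorded in the remarks preceding the statement. Combining the two cases yields the claim.

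I do not expect a genuine obstacle here; the whole argument hinges on the single observation that testing against $1_M$ already forces $Z_2(M)$ to be trivial or full, after which the two cases are essentially definitional. The only points requiring a line of care are the containment $Z(M) \subseteq Z_2(M)$, which is immediate from the defining equality $Z_2(M)/Z(M) = Z(M/Z(M))$, and the identity $f^{-1}(0) = r_M(f)$; both are routine and were noted in the introduction.
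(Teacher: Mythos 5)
Your argument is correct and is essentially identical to the paper's: both test the Goldie Rickart condition against $1_M$ to force $Z_2(M)=0$ or $Z_2(M)=M$ by indecomposability, and then use the observation that a nonsingular Goldie Rickart module is Rickart since $f^{-1}(Z_2(M))=r_M(f)$. No further comment is needed.
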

\begin{proof} Let $M$ be an indecomposable Goldie Rickart module
and $1$ denote the identity endomorphism of $M$. Since
$1^{-1}(Z_2(M))=Z_2(M)$, $Z_2(M)=M$ or $Z_2(M)=0$. This implies
that $M$ is $Z_2$-torsion or it is nonsingular and so it is
Rickart.
\end{proof}

\begin{prop} Every indecomposable extending module is a nonsingular
or Goldie Rickart module.
\end{prop}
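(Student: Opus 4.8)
The plan is to show that in any extending module $M$ the Goldie torsion submodule $Z_2(M)$ is a direct summand, and then to invoke indecomposability. Recall that a module is extending precisely when every closed submodule (one admitting no proper essential extension inside $M$) is a direct summand, so the heart of the argument is to verify that $Z_2(M)$ is always closed in $M$.

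To this end I would first record the standard fact that $M/Z_2(M)$ is nonsingular. This follows directly from the defining relation $Z(M/Z(M)) = Z_2(M)/Z(M)$: writing $\overline{M} = M/Z(M)$, one has $M/Z_2(M) \cong \overline{M}/Z(\overline{M})$, and the quotient of any module by its singular submodule is nonsingular. Next I would argue that $Z_2(M)$ is closed. Suppose $Z_2(M) \le_e K$ for some $K \le M$. Then $K/Z_2(M)$ is singular, being a quotient by an essential submodule; on the other hand $K/Z_2(M)$ is a submodule of the nonsingular module $M/Z_2(M)$, hence nonsingular. A module that is simultaneously singular and nonsingular is zero, so $K = Z_2(M)$. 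Thus $Z_2(M)$ has no proper essential extension in $M$, i.e.\ it is closed.

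Since $M$ is extending, the closed submodule $Z_2(M)$ is therefore a direct summand of $M$. As $M$ is indecomposable this forces $Z_2(M) = 0$ or $Z_2(M) = M$. In the first case $Z(M) \le Z_2(M) = 0$, so $M$ is nonsingular; in the second case $M = Z_2(M)$ is $Z_2$-torsion, hence Goldie Rickart by the observation recorded after the definition. Either alternative yields the conclusion.

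I expect the closedness of $Z_2(M)$ to be the only real obstacle; everything else is bookkeeping. The two ingredients it rests on — that $M/Z_2(M)$ is nonsingular and that an essential inclusion produces a singular quotient — are routine, and it is worth noting that this step uses nothing about the extending hypothesis, so in fact $Z_2(M)$ is closed in every module, with the extending property entering only to upgrade ``closed'' to ``direct summand''.
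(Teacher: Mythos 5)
Your overall route is exactly the paper's: $Z_2(M)$ is closed in $M$, hence (by the extending hypothesis) a direct summand, hence $0$ or $M$ by indecomposability, and the two cases give a nonsingular module and a $Z_2$-torsion (so Goldie Rickart) module respectively. The paper simply asserts the closedness of $Z_2(M)$ as known; you supply a proof, and that is where the one real problem sits.

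The principle you invoke --- ``the quotient of any module by its singular submodule is nonsingular'' --- is false, and its failure is the entire reason the second singular submodule exists: if $Z(M/Z(M))$ were always zero, then $Z_2(M)=Z(M)$ for every $M$, and $Z_2$ would be redundant. (The paper itself exhibits a ring with $Z({}_RR)$ properly contained in $Z_2({}_RR)$.) Applying the principle one level up, to $\overline{M}=M/Z(M)$, does not rescue it, since your argument uses nothing special about $\overline{M}$. The fact you actually need, $Z(M/Z_2(M))=0$, is true, but the correct justification is different: the class of Goldie torsion modules is closed under extensions, so if $N/Z_2(M)=Z(M/Z_2(M))$, then $N$ is an extension of the Goldie torsion module $Z_2(M)$ by the singular (hence Goldie torsion) module $N/Z_2(M)$, whence $N$ is Goldie torsion and $N\subseteq Z_2(M)$, forcing $Z(M/Z_2(M))=0$. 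With that repair, the rest of your argument --- the singular-versus-nonsingular clash forcing $K=Z_2(M)$, and the endgame via indecomposability --- is correct and matches the paper.
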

\begin{proof} Let $M$ be an indecomposable extending module. $Z_2(M)$ is a direct summand of $M$
since it is a closed submodule of $M$. Hence $Z_2(M)=0$ or
$Z_2(M)=M$. This implies that $M$ is nonsingular or Goldie Rickart
due to $f^{-1}(Z_2(M))=M$ for every $f\in S$.
\end{proof}

Recall that a module $M$ has the {\it (strong) summand
intersection property} if the intersection of (any) two direct
summands is a direct summand of $M$.

\begin{prop} Every $t$-Baer module $M$ is Goldie Rickart. The
converse holds if $M$ has the strong summand intersection property
for direct summands which contain $Z_2(M)$.
\end{prop}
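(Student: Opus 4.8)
The plan is to handle the two assertions separately, with both resting on the identity $f^{-1}(Z_2(M)) = t_M(Sf)$ already recorded in the introduction.

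For the forward implication I would argue directly. For a fixed $f \in S$, the set $Sf$ is a principal left ideal of $S$. If $M$ is $t$-Baer, then $t_M(Sf)$ is a direct summand of $M$ by hypothesis; since $t_M(Sf) = f^{-1}(Z_2(M))$, the preimage $f^{-1}(Z_2(M))$ is a direct summand. As $f$ was arbitrary, $M$ is Goldie Rickart. This half is immediate and requires no extra hypothesis.

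For the converse, assume $M$ is Goldie Rickart and enjoys the stated summand intersection property. Let $I$ be an arbitrary left ideal of $S$; the goal is to show $t_M(I)$ is a direct summand. The key reduction is the set-theoretic identity
\[ t_M(I) = \bigcap_{f \in I} f^{-1}(Z_2(M)), \]
which holds because $m \in t_M(I)$ means exactly that $fm \in Z_2(M)$ for every $f \in I$, i.e. $m \in f^{-1}(Z_2(M))$ for every $f \in I$. By Goldie Rickartness each $f^{-1}(Z_2(M))$ is a direct summand of $M$, and each such summand contains $Z_2(M)$: since the Goldie torsion submodule is fully invariant, $f(Z_2(M)) \subseteq Z_2(M)$, whence $Z_2(M) \subseteq f^{-1}(Z_2(M))$. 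Thus $\{\, f^{-1}(Z_2(M)) : f \in I \,\}$ is a family of direct summands each containing $Z_2(M)$, and the assumed strong summand intersection property for precisely such summands forces their intersection $t_M(I)$ to be a direct summand. Since $I$ was arbitrary, $M$ is $t$-Baer.

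The only place where a genuine module-theoretic fact enters, rather than a formal manipulation, is the verification that every $f^{-1}(Z_2(M))$ contains $Z_2(M)$, so that the restricted intersection hypothesis — assumed only for summands containing $Z_2(M)$ — actually applies to this family. This is exactly where full invariance of $Z_2(M)$ is used, and it is the hinge that makes the weakened intersection property sufficient. I expect this to be the main point requiring care; beyond it, both directions follow formally from the displayed identity and the definitions.
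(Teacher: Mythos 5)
Your forward direction is exactly the paper's: both rest on the identity $f^{-1}(Z_2(M)) = t_M(Sf)$ from the introduction, so that $t$-Baer applied to the principal left ideal $Sf$ immediately yields Goldie Rickart. For the converse the paper simply cites \cite[Theorem 3.2]{AH}, whereas you give a self-contained argument, and your argument is correct: for a left ideal $I$ of $S$ the identity $t_M(I)=\bigcap_{f\in I}f^{-1}(Z_2(M))$ is immediate from the definition of $t_M$, each term is a direct summand by Goldie Rickartness, and each contains $Z_2(M)$ by full invariance of the Goldie torsion submodule (which follows from full invariance of $Z(M)$ and functoriality of the construction on $M/Z(M)$), so the strong summand intersection property restricted to summands containing $Z_2(M)$ applies to the whole family and delivers $t_M(I)$ as a summand. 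You are also right to flag the containment $Z_2(M)\subseteq f^{-1}(Z_2(M))$ as the hinge that lets the weakened intersection hypothesis suffice; the paper itself records the special case $\bigcap_{f\in S}f^{-1}(Z_2(M))=Z_2(M)$ in the introduction, which is the same observation. What your route buys is independence from the external reference; what the paper's citation buys is brevity. Either is acceptable.
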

\begin{proof} Let $M$ be a $t$-Baer module and $f\in S$. Since
$t_M(Sf)=f^{-1}(Z_2(M))$, $M$ is a Goldie Rickart module. The
converse is true due to \cite[Theorem 3.2]{AH}.
\end{proof}

In \cite{AH} it is said that a submodule $N$ of a module $M$ is
{\it t-essential}  if for every submodule $L$ of $M$, $N\cap L\leq
Z_2(M)$ implies that $L\leq Z_2(M)$, and $N$ is called {\it
$t$-closed} if $N$ has no t-essential extension in $M$. The module
$M$ is called {\it $t$-extending} if every $t$-closed submodule of
$M$ is a direct summand of $M$, while  a ring $R$ is called {\it
right $\Sigma$-$t$-extending} if every free $R$-module is
$t$-extending.

\begin{prop}\label{t-ext} Every $t$-extending module is Goldie Rickart.
\end{prop}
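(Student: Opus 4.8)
The plan is to show that for each $f\in S$ the submodule $N=f^{-1}(Z_2(M))$ is $t$-closed in $M$; once this is established, the hypothesis that $M$ is $t$-extending immediately forces $N$ to be a direct summand of $M$, which is exactly the defining property of a Goldie Rickart module. So the whole argument reduces to recognizing $f^{-1}(Z_2(M))$ as a $t$-closed submodule, and everything else is bookkeeping.

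First I would record two structural facts about $N$. Since $Z_2(M)$ is a fully invariant submodule, $f(Z_2(M))\subseteq Z_2(M)$, so $Z_2(M)\subseteq f^{-1}(Z_2(M))=N$. Next, composing $f$ with the projection $M\to M/Z_2(M)$ yields a homomorphism $\overline f:M\to M/Z_2(M)$, $m\mapsto f(m)+Z_2(M)$, whose kernel is precisely $N$. Hence $M/N\cong (f(M)+Z_2(M))/Z_2(M)$ is isomorphic to a submodule of $M/Z_2(M)$. Because $M/Z_2(M)$ is nonsingular and submodules of nonsingular modules are nonsingular, I conclude that $M/N$ is nonsingular.

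The key step is then to pass from ``$M/N$ nonsingular with $Z_2(M)\subseteq N$'' to ``$N$ is $t$-closed.'' I would argue through the nonsingular quotient $\overline M=M/Z_2(M)$: writing $\overline N=N/Z_2(M)$, the isomorphism $\overline M/\overline N\cong M/N$ shows $\overline M/\overline N$ is nonsingular, so $\overline N$ is a closed submodule of $\overline M$. If $N$ had a proper $t$-essential extension $K$ in $M$, then, using $Z_2(K)=K\cap Z_2(M)=Z_2(M)$, I would check that $\overline N$ is essential in $\overline K=K/Z_2(M)$: any relation $\overline L\cap\overline N=0$ lifts to $Z_2(M)\le L\le K$ with $N\cap L\subseteq Z_2(M)=Z_2(K)$, whence $L\subseteq Z_2(M)$ by $t$-essentiality, forcing $\overline L=0$. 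Since $\overline N\subsetneq\overline K$, this would contradict the closedness of $\overline N$ in $\overline M$; thus $N=K$ and $N$ is $t$-closed. Alternatively, the correspondence between $t$-closed submodules of $M$ and closed submodules of $M/Z_2(M)$ is available from the machinery of \cite{AH} and may simply be cited.

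Finally, with $N=f^{-1}(Z_2(M))$ shown to be $t$-closed, the $t$-extending hypothesis gives that $N$ is a direct summand of $M$. As $f\in S$ was arbitrary, $M$ is Goldie Rickart. I expect the only genuine obstacle to be the middle step---verifying that nonsingularity of the quotient yields $t$-closedness---while the full invariance of $Z_2$ and the passage from $t$-closed to direct summand are routine.
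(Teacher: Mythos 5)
Your proof is correct and follows the same route as the paper: identify $f^{-1}(Z_2(M))$ as a $t$-closed submodule and then invoke the $t$-extending hypothesis to conclude it is a direct summand. The only difference is that the paper obtains the $t$-closedness by citing \cite[Corollary 2.7]{AH}, whereas you prove it directly (via $Z_2(M)\subseteq f^{-1}(Z_2(M))$, the nonsingularity of $M/f^{-1}(Z_2(M))$, and the passage to the nonsingular quotient $M/Z_2(M)$), and that self-contained verification is sound.
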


\begin{proof} Let $M$ be a $t$-extending module and $f\in S$.
By \cite[Corollary 2.7]{AH}, for any module $M$, $f^{-1}(Z_2(M))$
is $t$-closed in $M$. By hypothesis $f^{-1}(Z_2(M))$ is a direct
summand of $M$.
\end{proof}

By combining Proposition \ref{t-ext} with \cite[Theorem 3.12]{AH} we have Theorem \ref{dortlu}.
\begin{thm}\label{dortlu} The following are equivalent for a ring $R$.
\begin{enumerate}
\item[{\rm (1)}] $R$ is right $\Sigma$-$t$-extending.
\item[{\rm (2)}] Every $R$-module is $t$-extending.
\item[{\rm (3)}] Every $R$-module is $t$-Baer.
\item[{\rm (4)}] Every $R$-module is Goldie Rickart.
\end{enumerate}
\end{thm}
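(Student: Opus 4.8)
The plan is to take the equivalences $(1)\Leftrightarrow(2)\Leftrightarrow(3)$ for free from \cite[Theorem 3.12]{AH} and to splice $(4)$ into the chain by proving $(2)\Rightarrow(4)$ and $(4)\Rightarrow(3)$. The first of these is immediate: if every $R$-module is $t$-extending, then by Proposition \ref{t-ext} every $R$-module is Goldie Rickart, so $(2)\Rightarrow(4)$ (and $(3)\Rightarrow(4)$ is equally immediate from the fact that every $t$-Baer module is Goldie Rickart). Hence the whole content lies in the reverse passage $(4)\Rightarrow(3)$, namely in showing that if every $R$-module is Goldie Rickart then every $R$-module is $t$-Baer.

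For $(4)\Rightarrow(3)$ I would fix a module $M$ and, in view of the converse half of the proposition asserting that a Goldie Rickart module with the strong summand intersection property for summands containing $Z_2(M)$ is $t$-Baer, reduce the task to verifying that $M$ has precisely this intersection property; then $t_M(I)=\bigcap_{f\in I} f^{-1}(Z_2(M))$ is a summand for every left ideal $I\le S$, which is exactly $t$-Baerness. Two preliminary reductions make this manageable. First, taking $f=1_M$ shows that $Z_2(M)=1_M^{-1}(Z_2(M))$ is a summand, so $M=Z_2(M)\oplus M'$ with $M'$ nonsingular; since every summand containing $Z_2(M)$ has the form $Z_2(M)\oplus(\text{summand of }M')$ and intersections distribute over this decomposition, it suffices to prove that the nonsingular module $M'$ has the ordinary strong summand intersection property. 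Second, on a nonsingular module preimages of $Z_2$ are just kernels, so Goldie Rickart coincides with Rickart there.

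The heart of the argument is a direct-sum/product construction. Given a family of summands $K_\alpha=\ker\pi_\alpha$ of $M'$, with $\pi_\alpha\in\operatorname{End}(M')$ the associated projections, I would form $P=(M')^{\Lambda}$ and the endomorphism $\psi$ of $Q=M'\oplus P$ defined by $\psi(m,x)=(0,(\pi_\alpha m)_\alpha)$. Because $M'$ is nonsingular, so is the product $P$ and hence $Q$, whence $Z_2(Q)=0$ and $\psi^{-1}(Z_2(Q))=\ker\psi=\bigl(\bigcap_\alpha K_\alpha\bigr)\oplus P$. Now $Q$ is an $R$-module, so by hypothesis $(4)$ it is Goldie Rickart; therefore $\bigl(\bigcap_\alpha K_\alpha\bigr)\oplus P$ is a direct summand of $Q=M'\oplus P$. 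Factoring out the common summand $0\oplus P$ (legitimate, since a direct summand containing a given submodule descends to a direct summand of the corresponding quotient) yields that $\bigcap_\alpha K_\alpha$ is a summand of $M'$, as required.

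The step I expect to be the main obstacle is precisely the use of the product $P=(M')^{\Lambda}$: for an arbitrary module $Z_2$ does not commute with infinite products, so the crucial identity $\psi^{-1}(Z_2(Q))=\bigl(\bigcap_\alpha K_\alpha\bigr)\oplus P$ would break down. The reduction to the nonsingular module $M'$ is what rescues the construction, since nonsingularity is inherited by arbitrary products and forces $Z_2(Q)=0$. The remaining points, namely that endomorphisms preserve $Z_2$ (so that each $f^{-1}(Z_2(M))$ genuinely contains $Z_2(M)$) and the cancellation of $P$ from the direct summand, are routine and I would dispatch them briefly.
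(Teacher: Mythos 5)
Your argument is correct, but for the one genuinely new implication it takes a visibly different and more self-contained route than the paper. The paper's proof is a single sentence: \cite[Theorem 3.12]{AH} supplies $(1)\Leftrightarrow(2)\Leftrightarrow(3)$ and Proposition \ref{t-ext} supplies $(2)\Rightarrow(4)$, with the return implication $(4)\Rightarrow(1)$ left entirely to the cited theorem rather than argued. You instead prove $(4)\Rightarrow(3)$ explicitly: you split off $Z_2(M)$ using the identity endomorphism, reduce to the nonsingular complement $M'$, and then extract the \emph{strong} summand intersection property of $M'$ by testing the Goldie Rickart hypothesis on the auxiliary module $M'\oplus (M')^{\Lambda}$ with the endomorphism $(m,x)\mapsto\bigl(0,(\pi_\alpha m)_\alpha\bigr)$, whose kernel is $\bigl(\bigcap_\alpha K_\alpha\bigr)\oplus (M')^{\Lambda}$; finally you invoke the converse half of the proposition comparing $t$-Baer and Goldie Rickart modules. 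The two delicate points are handled correctly: nonsingularity does pass to arbitrary direct products, so $Z_2$ of the test module vanishes and the preimage is an honest kernel, and a direct summand of $M'\oplus P$ containing the summand $P$ does intersect $M'$ in a direct summand of $M'$. What your approach buys is a concrete, checkable proof of the direction that the paper's citation glosses over, and it is the only place in either treatment where one sees \emph{why} the module-by-module Goldie Rickart condition globalizes to $t$-Baerness (namely, via the infinite-product trick upgrading the finite summand intersection property of Proposition \ref{sumint} to the strong one). What it costs is the detour through an infinite product and a residual reliance on \cite[Theorem 3.2]{AH} through the converse of the $t$-Baer comparison proposition, so the dependence on \cite{AH} is redistributed rather than removed.
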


We obtain the next result as an immediate consequence of
\cite[Theorem 2.15]{G}, \cite[Theorem 3.12]{AH} and Theorem
\ref{dortlu}.
\begin{prop} If a ring $R$ is Morita-equivalent to a finite direct product of full lower
triangular matrix rings over division rings, then every $R$-module
is Goldie Rickart.
\end{prop}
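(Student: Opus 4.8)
The plan is to derive the statement as a chain of equivalences rather than by any direct module-theoretic computation, since the property ``every $R$-module is Goldie Rickart'' is precisely condition (4) of Theorem \ref{dortlu}. By that theorem it is equivalent to condition (1), that $R$ be right $\Sigma$-$t$-extending, and to conditions (2) and (3), that every $R$-module be $t$-extending, respectively $t$-Baer. Thus the entire task reduces to showing that the structural hypothesis on $R$ places it at one of the four equivalent nodes of Theorem \ref{dortlu}.

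First I would invoke \cite[Theorem 2.15]{G}, which is the result carrying the Morita-theoretic content: it classifies the rings $R$ that are Morita-equivalent to a finite direct product of full lower triangular matrix rings over division rings in terms of an extending-type condition on all $R$-modules. Because the properties in play here are, by Theorem \ref{dortlu}, just reformulations of the single ring-theoretic property of being right $\Sigma$-$t$-extending, this reference lets me transport the hypothesis across the Morita equivalence without proving invariance by hand.

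Next I would apply \cite[Theorem 3.12]{AH} to reconcile the exact phrasing used in \cite{G} with conditions (1)--(3) of Theorem \ref{dortlu}; this is the same equivalence among ``$R$ right $\Sigma$-$t$-extending,'' ``every $R$-module $t$-extending,'' and ``every $R$-module $t$-Baer'' that was combined with Proposition \ref{t-ext} to build Theorem \ref{dortlu}. Having identified the hypothesis with one of these conditions, I would finish with the equivalence (1)$\Leftrightarrow$(4) of Theorem \ref{dortlu} to conclude that every $R$-module is Goldie Rickart.

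The only genuine obstacle is the bookkeeping: I must verify that the extending condition attached by \cite[Theorem 2.15]{G} to this Morita-equivalence class is literally one of the conditions appearing in Theorem \ref{dortlu}, and not merely a superficially different variant, so that the hand-off through \cite[Theorem 3.12]{AH} is legitimate. Once the wordings are matched there is nothing further to build — no annihilator or preimage $f^{-1}(Z_2(M))$ need be computed — which is exactly why the result may be recorded as an immediate consequence of the three cited facts.
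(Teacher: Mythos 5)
Your proposal matches the paper's own argument: the paper records this proposition as an immediate consequence of \cite[Theorem 2.15]{G}, \cite[Theorem 3.12]{AH} and Theorem \ref{dortlu}, which is precisely the chain of citations you assemble. The extra care you take about matching the wording of Goodearl's condition to the conditions in Theorem \ref{dortlu} is reasonable diligence, but the paper treats that identification as immediate and offers no further detail.
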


We now give a useful characterization of Goldie Rickart modules by
using Goldie torsion submodules.

\begin{thm}\label{goldie} A module $M$ is Goldie Rickart if and only if $M=Z_2(M)\oplus
N$ where $N$ is a (nonsingular) Rickart module.
\end{thm}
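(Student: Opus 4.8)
The plan is to prove both implications by exploiting two structural features of the Goldie torsion submodule: that $Z_2(M)$ is fully invariant in $M$, so that $f(Z_2(M))\subseteq Z_2(M)$ for every $f\in S$, and that $Z_2$ respects direct sums, i.e. $Z_2(A\oplus B)=Z_2(A)\oplus Z_2(B)$. Both are standard properties of the hereditary torsion radical $Z_2$, and they do almost all of the work.

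For the forward implication, suppose $M$ is Goldie Rickart. Applying the defining condition to the identity endomorphism gives $1^{-1}(Z_2(M))=Z_2(M)$ as a direct summand, so $M=Z_2(M)\oplus N$ for some $N$. First I would check that $N$ is nonsingular: from $Z_2(M)=Z_2(Z_2(M))\oplus Z_2(N)=Z_2(M)\oplus Z_2(N)$ one gets $Z_2(N)=0$. Since a nonsingular module is Goldie Rickart precisely when it is Rickart, it remains to show $N$ is Goldie Rickart. Given $g\in\mathrm{End}_R(N)$, I would extend it to $\tilde g\in S$ by letting $\tilde g$ act as $g$ on $N$ and as $0$ on $Z_2(M)$; a short computation, using $N\cap Z_2(M)=0$, then yields $\tilde g^{-1}(Z_2(M))=Z_2(M)\oplus\ker g$.

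Now comes the key step. Since $M$ is Goldie Rickart, $Z_2(M)\oplus\ker g$ is a direct summand of $M$, say $M=(Z_2(M)\oplus\ker g)\oplus K$. I then transfer this splitting down to $N$: the projection $\pi:M\to N$ with kernel $Z_2(M)$ restricts to an isomorphism on any complement of $Z_2(M)$, in particular on $\ker g\oplus K$; hence $N=\pi(\ker g)\oplus\pi(K)=\ker g\oplus\pi(K)$, using $\ker g\subseteq N$. Thus $\ker g=r_N(g)$ is a direct summand of $N$, so $N$ is Rickart. I expect this descent of the summand from $M$ down to $N$ to be the main obstacle, since it is the only place where a decomposition must be manipulated rather than merely computed.

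For the converse, suppose $M=Z_2(M)\oplus N$ with $N$ nonsingular and Rickart, and fix $f\in S$. Writing $Z=Z_2(M)$ and using that $Z$ is fully invariant, the matrix of $f$ relative to $M=Z\oplus N$ is upper triangular, so $f(z+n)=(f_{11}z+f_{12}n)+f_{22}n$ with the first summand in $Z$ and $f_{22}n\in N$. Hence $f(z+n)\in Z$ exactly when $f_{22}n=0$, giving $f^{-1}(Z)=Z\oplus\ker f_{22}$ with $f_{22}\in\mathrm{End}_R(N)$. Since $N$ is Rickart, $\ker f_{22}=r_N(f_{22})$ splits off $N$ as $N=\ker f_{22}\oplus N'$, whence $M=Z\oplus\ker f_{22}\oplus N'=f^{-1}(Z)\oplus N'$. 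Therefore $f^{-1}(Z_2(M))$ is a direct summand of $M$, so $M$ is Goldie Rickart, completing the equivalence.
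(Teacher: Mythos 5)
Your proof is correct and follows essentially the same route as the paper's: both directions use the decomposition $M=Z_2(M)\oplus N$ obtained from the identity endomorphism, extend or restrict endomorphisms across that decomposition, and reduce to the identity $f^{-1}(Z_2(M))=Z_2(M)\oplus r_N(\pi_N f_{|_N})$. The only cosmetic differences are that you extend $g\in\mathrm{End}_R(N)$ by $0$ on $Z_2(M)$ where the paper uses $1_{Z_2(M)}$, and you spell out the descent of the summand to $N$ and the nonsingularity of $N$ in slightly more detail.
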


\begin{proof} Let $M$ be a Goldie Rickart module and $1_M$
denote the identity endomorphism of $M$. Then
$1_M^{-1}(Z_2(M))=Z_2(M)$ is a direct summand of $M$. Let
$M=Z_2(M)\oplus N$ for some submodule $N$ of $M$ and $f\in $
End$_R(N)$. Hence $1_{Z_2(M)}\oplus f \in S$, say
$g=1_{Z_2(M)}\oplus f$. This implies that
$g^{-1}(Z_2(M))=Z_2(M)\oplus r_N(f)$. By assumption,
$g^{-1}(Z_2(M))$ is a direct summand of $M$. It follows that
$r_N(f)$ is a direct summand of $N$. Therefore $N$ is Rickart.
Since $M/Z_2(M)$ is nonsingular, $N$ is a nonsingular module. For
the converse, assume that $M=Z_2(M)\oplus N$ where $N$ is a
(nonsingular) Rickart module. Let $f\in S$ and $\pi _N$ denote the
projection on $N$ along $Z_2(M)$. Then $\pi _N f_{|_N} \in $
End$_R(N)$ and it can be easily shown that
$f^{-1}(Z_2(M))=Z_2(M)\oplus r_N(\pi _N f_{|_N})$. Since $N$ is
Rickart, $r_N(\pi _N f_{|_N})$ is a direct summand of $N$, and so
$f^{-1}(Z_2(M))$ is a direct summand of $M$. This completes the
proof.
\end{proof}

In \cite{NZ}, if $I$ is an ideal of a ring $R$, it is said that
{\it idempotents lift strongly modulo $I$} if whenever $a^2-a\in
I$, there exists $e^2 = e\in aR$ (equivalently $e^2 = e\in  Ra$)
such that $e-a\in I$. Also a ring $R$ is  called {\it
$Z_2(R_R)$-semiperfect} \cite{NZ} if $R/Z_2(R_R)$ is semisimple
and idempotents lift strongly modulo $Z_2(R_R)$.

\begin{cor}\label{semiper} Let $R$ be a $Z_2(R_R)$-semiperfect ring. Then every $R$-module  is
Goldie Rickart.
\end{cor}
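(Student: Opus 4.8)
The plan is to use Theorem~\ref{goldie} as the central tool, reducing the problem to showing that a $Z_2(R_R)$-semiperfect ring yields a decomposition of the required form for every module. By that theorem, it suffices to prove that every $R$-module $M$ decomposes as $M = Z_2(M) \oplus N$ with $N$ a nonsingular Rickart module. First I would examine what the hypothesis buys us at the level of the ring: $R/Z_2(R_R)$ is semisimple and idempotents lift strongly modulo $Z_2(R_R)$.

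A cleaner route, however, is to recognize that the condition ``every $R$-module is Goldie Rickart'' is characterized ring-theoretically by Theorem~\ref{dortlu}, which ties it to $R$ being right $\Sigma$-$t$-extending (equivalently every $R$-module being $t$-Baer or $t$-extending). So the key reduction I would attempt is to show that a $Z_2(R_R)$-semiperfect ring is right $\Sigma$-$t$-extending, after which Theorem~\ref{dortlu} finishes the argument immediately. To do this, I would take an arbitrary free $R$-module $F$ and a $t$-closed submodule $K$ of $F$, and try to split $K$ off as a direct summand. The semisimplicity of $R/Z_2(R_R)$ should let me pass to the quotient $F/Z_2(F)$, which ought to be a semisimple (hence every submodule is a summand) module over $R/Z_2(R_R)$; here I would use that $Z_2$ behaves well under the torsion theory so that $F/Z_2(F)$ is a module over the factor ring.

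The main obstacle will be lifting the splitting from the quotient $F/Z_2(F)$ back to $F$ itself. A complement to $\overline{K} = K/Z_2(F)$ in $F/Z_2(F)$ gives an idempotent in the endomorphism ring of the quotient, and the strong lifting of idempotents modulo $Z_2(R_R)$ is precisely the hypothesis designed to promote such a quotient-level splitting to an honest direct-sum decomposition of $F$ containing $Z_2(F)$. I would expect to invoke the strong-lifting property to produce an idempotent $e \in S = \mathrm{End}_R(F)$ (or in $R$ acting coordinatewise) whose image realizes the complement, yielding $F = eF \oplus (1-e)F$ with $Z_2(F)$ sitting inside one summand and $K$ recovered as a summand. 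The delicate point is matching the strong-lifting condition, stated for elements of the ring modulo the ideal $Z_2(R_R)$, with idempotents in an endomorphism ring of a possibly infinite-rank free module; I anticipate needing either a reduction to the finitely generated case or a direct verification that strong lifting transfers to $\mathrm{End}_R(F)$ modulo its corresponding Goldie-torsion ideal.

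Alternatively, if the $\Sigma$-$t$-extending route proves awkward, I would fall back on the direct decomposition approach through Theorem~\ref{goldie}: split $M = Z_2(M) \oplus N$ using that $Z_2(M)$ is a direct summand (which follows once $Z_2(R_R)$ is a summand-inducing ideal under the semiperfect hypothesis), and then verify $N$ is Rickart by showing $N$, as a nonsingular module over a ring whose nonsingular part is semisimple, has all kernels splitting. Either way, the crux is the same: converting the strong idempotent-lifting hypothesis into an actual splitting of $Z_2$ off every module, and I expect that single step to carry the entire weight of the proof.
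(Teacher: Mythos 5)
Your overall strategy is the same as the paper's: reduce to Theorem~\ref{goldie} by producing, for every $R$-module $M$, a decomposition $M = Z_2(M)\oplus N$ with $N$ Rickart. But the single step that carries all the weight --- that the $Z_2(R_R)$-semiperfect hypothesis actually forces $Z_2(M)$ to be a direct summand of \emph{every} module $M$, with a semisimple (hence Rickart) complement --- is never established in your write-up. Both of your routes end in anticipation rather than argument: the $\Sigma$-$t$-extending route stops at ``I would expect to invoke the strong-lifting property,'' and the fallback route asserts that $Z_2(M)$ splits off ``once $Z_2(R_R)$ is a summand-inducing ideal under the semiperfect hypothesis,'' which is precisely the claim to be proved. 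The paper closes this gap by citing Nicholson and Zhou (\emph{Strong Lifting}, Theorem~49), which states exactly that over a $Z_2(R_R)$-semiperfect ring every module decomposes as $M = Z_2(M)\oplus N$ with $N$ semisimple; with that in hand the corollary is immediate from Theorem~\ref{goldie}. The difficulty you flag --- that strong lifting is a hypothesis about idempotents of $R$ modulo the ideal $Z_2(R_R)$, not about idempotents in $\mathrm{End}_R(F)$ for an arbitrary (possibly infinitely generated) free module $F$, let alone an arbitrary module $M$ --- is genuine and is exactly what that cited theorem resolves; your proposal does not.

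Two smaller remarks. First, your $\Sigma$-$t$-extending detour through Theorem~\ref{dortlu} is unnecessary even if completed, since Theorem~\ref{goldie} applies directly once the splitting is known. Second, the tail end of your fallback is sound and recoverable: if $N$ is a nonsingular $R$-module then $N\,Z_2(R_R)\subseteq Z_2(N)=0$, so $N$ is a module over the semisimple ring $R/Z_2(R_R)$, hence semisimple as an $R$-module and therefore Rickart. So the complement is handled correctly; the missing piece is solely the existence of the splitting $M = Z_2(M)\oplus N$ for every $M$.
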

\begin{proof} Let $M$ be an $R$-module. Then $M=Z_2(M)\oplus N$ where $N$ is semisimple by
\cite[Theorem 49]{NZ}. Hence $M$ is Goldie Rickart  in virtue of
Theorem \ref{goldie}.
\end{proof}

In the light of \cite[Theorem 2.5]{YZ}, if  $R$ is a QF-ring, then
it is $Z_2(R_R)$-semiperfect. Then we have the next result due to
Corollary \ref{semiper}.
\begin{cor} Every module over a QF-ring is Goldie Rickart.
\end{cor}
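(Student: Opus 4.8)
The plan is to realize this statement as a two-step corollary that chains together the preceding machinery, since essentially all the work has already been done. The key observation is that the property "Goldie Rickart for every module" has just been established (in Corollary \ref{semiper}) to hold over any $Z_2(R_R)$-semiperfect ring. So the entire task reduces to certifying that a QF-ring falls into that class.

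First I would invoke \cite[Theorem 2.5]{YZ}, which is precisely the bridge needed: it guarantees that whenever $R$ is a QF-ring, $R$ is $Z_2(R_R)$-semiperfect. This is the only genuinely external input, and it is exactly the hypothesis demanded by the earlier corollary. Once this is in hand, no further structural analysis of QF-rings (injectivity, the annihilator conditions, self-injectivity, etc.) is required — those facts are all absorbed into the cited theorem.

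Second, with $R$ now known to be $Z_2(R_R)$-semiperfect, I would apply Corollary \ref{semiper} directly to conclude that every $R$-module is Goldie Rickart. Unwinding the underlying mechanism, the point is that over such a ring each module $M$ decomposes as $M = Z_2(M) \oplus N$ with $N$ semisimple (by \cite[Theorem 49]{NZ}), and the Goldie Rickart property then follows from Theorem \ref{goldie}; but all of this is already packaged in the statement of Corollary \ref{semiper}, so it need not be repeated.

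Honestly, there is no substantial obstacle here: the result is a formal consequence of the implication chain QF $\Rightarrow$ $Z_2(R_R)$-semiperfect $\Rightarrow$ every module Goldie Rickart. The only thing warranting a moment's care is making sure the hypotheses of the two cited results align cleanly — that the notion of $Z_2(R_R)$-semiperfect used in \cite{YZ} is the same one appearing in Corollary \ref{semiper} — which it is, since both trace back to the definition from \cite{NZ}.
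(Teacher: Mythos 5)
Your proposal is correct and follows exactly the paper's own route: the paper derives this corollary by citing \cite[Theorem 2.5]{YZ} to conclude that a QF-ring is $Z_2(R_R)$-semiperfect and then applying Corollary \ref{semiper}. Nothing further is needed.
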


Due to Theorem \ref{goldie}, if $M$ is a Goldie Rickart module,
then $M/Z_2(M)$ is a Rickart module, and it is Goldie Rickart
since it is nonsingular. But the converse does not hold in
general, as the following example shows.

\begin{ex}\label{ornek}\rm{ Let $\mathcal{P}=\{p\in \Bbb Z \mid
p~\mbox{is~prime}\}$ and consider the $\Bbb Z$-module
$M=\prod\limits_{p\in \mathcal{P}}\Bbb Z_p$. Then
$Z(M)=\bigoplus\limits_{p\in \mathcal{P}}\Bbb Z_p$ and
$Z_2(M)=Z(M)$. Since $M/Z_2(M)$ is extending and nonsingular, it
is Rickart. But $Z_2(M)$ is not a direct summand of $M$, and so
$M$ is not Goldie Rickart, by Theorem \ref{goldie}. }
\end{ex}

\begin{rem}\rm{ Example \ref{ornek} also reveals the fact that
if $M/N$ is a Goldie Rickart module for any submodule $N$ of a
module $M$, then $M$ need not be Goldie Rickart. Because in
Example \ref{ornek}, the module $M/Z_2(M)$  is Rickart and
nonsingular, hence it is Goldie Rickart while $M$ is not Goldie
Rickart. }
\end{rem}

According to next examples, Rickart modules and Goldie Rickart
modules do not imply each other.

\begin{exs}\label{ex}\rm{
(1) Consider the module $M$ in the Example \ref{ornek}. It is
known from there $M$ is not Goldie Rickart. On the other hand, the
endomorphism ring $S$ of $M$ is  $\prod\limits_{p\in
\mathcal{P}}\Bbb Z_p$. Since $S$ is a von Neumann regular ring,
$M$ is Rickart by \cite[Corollary 3.2]{Wa}.

(2) Consider $\Bbb Z_4$ as a $\Bbb Z$-module. Then $\Bbb Z_4$
is a Goldie Rickart module due to $Z(\Bbb Z_4)=\Bbb Z_4=Z_2(\Bbb
Z_4)$. On the other hand, for $f\in $ End$_\Bbb Z(\Bbb Z_4)$ with
$f(\overline{1})=\overline{2}$,  $r_{\Bbb Z_4}(f)=2\Bbb Z_4$ is
not a direct summand of $\Bbb Z_4$. Hence $\Bbb Z_4$ is not
Rickart. }
\end{exs}

Now we give a relation between the Rickart and Goldie Rickart
modules.

\begin{thm}\label{versus} Let $M$ be a module. Then the following are
equivalent.
\begin{enumerate}
    \item[{\rm (1)}] $M$ is Goldie Rickart and $r_M(f)$ is a direct summand of $f^{-1}(Z_2(M))$
for any $f\in S$.
   \item[{\rm (2)}] $M$ is Rickart and $Z_2(M)$ is a
direct summand of $M$.
\end{enumerate}
\end{thm}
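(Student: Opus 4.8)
The plan is to prove each implication directly, exploiting the containment $r_M(f)\subseteq f^{-1}(Z_2(M))$ together with the fact that direct summands behave well inside the modular lattice of submodules of $M$.

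For $(1)\Rightarrow(2)$, I would first feed the identity endomorphism $1_M$ into the Goldie Rickart hypothesis: since $1_M^{-1}(Z_2(M))=Z_2(M)$, this immediately gives that $Z_2(M)$ is a direct summand of $M$. For the Rickart property, fix $f\in S$. By hypothesis $f^{-1}(Z_2(M))$ is a direct summand of $M$ and, by assumption, $r_M(f)$ is a direct summand of $f^{-1}(Z_2(M))$; since a direct summand of a direct summand is again a direct summand, $r_M(f)$ is a direct summand of $M$, so $M$ is Rickart.

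The interesting direction is $(2)\Rightarrow(1)$. Assuming $Z_2(M)$ is a direct summand, write $M=Z_2(M)\oplus N$ and let $\pi_N\in S$ denote the projection onto $N$ along $Z_2(M)$, so that $\ker\pi_N=Z_2(M)$. The key computation is that for any $f\in S$ one has $f(m)\in Z_2(M)$ if and only if $\pi_N f(m)=0$, whence $f^{-1}(Z_2(M))=r_M(\pi_N f)$. Since $\pi_N f\in S$ and $M$ is Rickart, $r_M(\pi_N f)$ is a direct summand of $M$; therefore $f^{-1}(Z_2(M))$ is a direct summand and $M$ is Goldie Rickart. This route realizes the preimage as an honest kernel and so bypasses Theorem \ref{goldie} entirely.

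It remains to show $r_M(f)$ is a direct summand of $f^{-1}(Z_2(M))$. Here both $r_M(f)$ and $f^{-1}(Z_2(M))$ are direct summands of $M$, with $r_M(f)\subseteq f^{-1}(Z_2(M))$. Writing $M=r_M(f)\oplus K$ and invoking the modular law gives $f^{-1}(Z_2(M))=r_M(f)\oplus\bigl(f^{-1}(Z_2(M))\cap K\bigr)$, so $r_M(f)$ splits off inside $f^{-1}(Z_2(M))$. I expect this modular-law step — the general principle that a direct summand sitting inside another direct summand is itself a direct summand of the larger one — to be the only nonroutine point; everything else is bookkeeping once the identity $f^{-1}(Z_2(M))=r_M(\pi_N f)$ is in hand.
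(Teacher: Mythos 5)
Your proof is correct, and in the direction $(2)\Rightarrow(1)$ it takes a genuinely different, more self-contained route than the paper. The paper deduces the Goldie Rickart property by invoking Theorem \ref{goldie}: it decomposes $M=Z_2(M)\oplus N$, notes that $N$ is Rickart (as a direct summand of the Rickart module $M$), and then reassembles. You instead use the identity $f^{-1}(Z_2(M))=r_M(\pi_N f)$, which exhibits the preimage as the kernel of an endomorphism of $M$ itself, so the Rickart hypothesis on $M$ applies directly; this bypasses Theorem \ref{goldie} and the fact that direct summands inherit the Rickart property. (The paper's proof of Theorem \ref{goldie} rests on the closely related identity $f^{-1}(Z_2(M))=Z_2(M)\oplus r_N(\pi_N f_{|_N})$, but there only $N$ is assumed Rickart, so the computation must be carried out inside $N$; here you are free to work in $M$.) Your handling of the remaining clause of (1) --- that $r_M(f)$ is a direct summand of $f^{-1}(Z_2(M))$ --- via the modular law makes explicit what the paper dismisses as ``clear,'' and your derivation of the fact that $Z_2(M)$ is a summand from the identity endomorphism simply inlines the first step of Theorem \ref{goldie} rather than citing it. Both arguments are sound; yours is the more elementary, while the paper's leans on the structural decomposition that it reuses throughout.
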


\begin{proof} (1) $\Rightarrow$ (2) Let $M$ be a Goldie Rickart module and $f\in S$.
Then $f^{-1}(Z_2(M))$ is a direct summand of $M$ and by
hypothesis, $r_M(f)$ is a direct summand of $f^{-1}(Z_2(M))$. It
follows that $M$ is Rickart. In addition, by Theorem \ref{goldie},
$Z_2(M)$ is a direct summand of $M$.

(2) $\Rightarrow$ (1) Let $M$ be a Rickart module and
$M=Z_2(M)\oplus N$ for some submodule $N$ of $M$. Then $N$ is
Rickart and so $M$ is Goldie Rickart by Theorem \ref{goldie}. The
rest is clear since $M$ is Rickart and $r_M(f)$ is a submodule of
$f^{-1}(Z_2(M))$ for any $f\in S$.
\end{proof}

\begin{prop} The following hold for a module $M$.
\begin{enumerate}
    \item [{\rm (1)}] If $M$ is Rickart with $Z(M)$ a direct
summand of $M$, then $M$ is Goldie Rickart.
    \item [{\rm (2)}] If $M$ is Goldie Rickart and $R$ is right
nonsingular, then $Z(M)$ is a direct summand of $M$.
\end{enumerate}
\end{prop}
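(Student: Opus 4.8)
The plan is to route both implications through Theorem~\ref{goldie}, which characterizes Goldie Rickart modules as those admitting a decomposition $M = Z_2(M)\oplus N$ with $N$ a nonsingular Rickart module. The common thread is the equality $Z_2(M)=Z(M)$, which in each part I would derive from the hypotheses rather than assume outright.

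For (1), I would begin with the given splitting $M = Z(M)\oplus K$. Since $Z(K)=K\cap Z(M)=0$, the summand $K$ is nonsingular, and because $K\cong M/Z(M)$ this forces $Z(M/Z(M))=0$, i.e. $Z_2(M)=Z(M)$. Thus the hypothesis already exhibits $M=Z_2(M)\oplus K$ with $K$ nonsingular. The one external input I would use is that a direct summand of a Rickart module is again Rickart (a basic property established in \cite{LRR}); applied to the summand $K$ of the Rickart module $M$, this makes $K$ a nonsingular Rickart module, and Theorem~\ref{goldie} then delivers that $M$ is Goldie Rickart.

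For (2), I would feed the identity endomorphism $1_M$ into the definition: $Z_2(M)=1_M^{-1}(Z_2(M))$ is a direct summand of $M$, exactly as observed at the start of the proof of Theorem~\ref{goldie}. It then remains only to replace $Z_2(M)$ by $Z(M)$, and this is where right nonsingularity of $R$ enters: for such $R$ one has $Z_2(M)=Z(M)$ for every module $M$, so substituting this equality shows $Z(M)$ is a direct summand of $M$.

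The only step needing genuine care is the equality $Z_2(M)=Z(M)$ under right nonsingularity in (2). I would establish it in two moves. First, $Z(M)$ is always essential in $Z_2(M)$: any $0\neq x\in Z_2(M)$ has $\overline{x}\in Z(M/Z(M))$ singular, so $xI\subseteq Z(M)$ for an essential right ideal $I$, and choosing $e\in I$ with $xe\neq 0$ gives $0\neq xe\in xR\cap Z(M)$. Consequently $Z_2(M)=Z(M)$ is equivalent to $Z(M)$ being closed in $M$. Second, right nonsingularity of $R$ is precisely the condition guaranteeing that $M/Z(M)$ is nonsingular, equivalently $Z(M)$ closed, for every $M$; I would verify this by a short annihilator argument using $Z(R_R)=0$, showing that if $x+Z(M)$ is singular in $M/Z(M)$ then $r_R(x)$ is essential and hence $x\in Z(M)$. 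With this in hand, both parts reduce cleanly to Theorem~\ref{goldie}, and no further obstacle remains.
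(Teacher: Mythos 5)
Your proof is correct and follows essentially the same route as the paper's: both parts hinge on establishing $Z(M)=Z_2(M)$ from the respective hypotheses (in (1) via the splitting $M=Z(M)\oplus K$, in (2) via right nonsingularity of $R$) and then invoking the decomposition characterization. The only cosmetic difference is that in (1) the paper cites Theorem~\ref{versus} while you go directly through Theorem~\ref{goldie} together with the fact from \cite{LRR} that summands of Rickart modules are Rickart, which is exactly how Theorem~\ref{versus} is proved anyway.
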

\begin{proof} (1) Let $Z(M)$ be a direct summand of $M$. Since $Z(M)$ is essential in $Z_2(M)$,
we have $Z(M)=Z_2(M)$.  Then Theorem \ref{versus} completes the
proof.

(2) The right nonsingularity of $R$ implies that $Z(M)=Z_2(M)$.
The rest is clear because $M$ is Goldie Rickart.
\end{proof}

\begin{lem}\label{sequence} Let $M$ be a module. Then the following are
equivalent.
\begin{enumerate}
\item[{\rm (1)}] $M$ is a Goldie Rickart module.
\item[{\rm (2)}] The exact sequence
$0\rightarrow f^{-1}(Z_2(M))\rightarrow M\rightarrow
M/f^{-1}(Z_2(M))\rightarrow 0$ is split for any $f\in S$.
\end{enumerate}
\end{lem}
\begin{proof} Let $f\in S$ and consider the exact sequence $0\rightarrow f^{-1}(Z_2(M))\rightarrow M\rightarrow M/f^{-1}(Z_2(M))\rightarrow
    0$. Then $M$ is Goldie Rickart if and only if $f^{-1}(Z_2(M))$ is a
    direct summand of $M$ if and only if the exact sequence is
    split.
\end{proof}

In the next result we give a characterization of semisimple rings
by using the notion of Goldie Rickart modules.

\begin{thm} The following are equivalent for a ring  $R$.
\begin{enumerate}
    \item[{\rm (1)}] Every $R$-module is Goldie Rickart and its Goldie torsion submodule  is projective.
    \item[{\rm (2)}] $R$ is semisimple.
\end{enumerate}
\end{thm}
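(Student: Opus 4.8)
The plan is to prove the equivalence by treating each direction separately, with the forward direction $(1)\Rightarrow(2)$ being the substantive one and $(2)\Rightarrow(1)$ essentially a collection of trivialities already established. For $(2)\Rightarrow(1)$, I would argue as follows: if $R$ is semisimple, then every $R$-module is semisimple, hence Goldie Rickart by the remark immediately after the definition. Moreover, over a semisimple ring every module is projective, so in particular the Goldie torsion submodule $Z_2(M)$ of any module $M$ is projective. This direction requires no real work.

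The main content lies in $(1)\Rightarrow(2)$. The strategy is to show that (1) forces every $R$-module to be projective, from which semisimplicity of $R$ follows by the standard characterization (a ring over which every module is projective is semisimple). So let $M$ be an arbitrary $R$-module. By hypothesis $M$ is Goldie Rickart, so by Theorem \ref{goldie} we may write $M=Z_2(M)\oplus N$ where $N$ is a nonsingular Rickart module. The term $Z_2(M)$ is projective by the second clause of hypothesis (1). The key step I expect to be the main obstacle is showing that the nonsingular summand $N$ is itself projective. To handle this, I would try to show that $N$ must in fact be semisimple — or more directly, that hypothesis (1) forces $Z_2(M)=0$ for suitable test modules, thereby pinning down the structure of $R$.

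Here is the sharper line of attack I would pursue. Since the hypothesis applies to \emph{every} $R$-module, I would first extract information by applying it to $R$ itself and to cyclic or singular modules. Concretely, any singular module $M$ satisfies $M=Z_2(M)$, so by (1) every singular $R$-module is projective; but a nonzero projective module cannot be singular over an arbitrary ring (a projective singular module must be zero, since projectivity gives an embedding into a free module, and a free module is nonsingular when $R$ is nonsingular — so I would first need to establish that $R$ itself is nonsingular). Thus the real first step is to prove $Z_2(R_R)=0$: I would apply (1) to an appropriate singular module built from $R$, use that its Goldie torsion part equals the whole module and is therefore projective, and derive a contradiction unless the singular part vanishes. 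Once $R$ is right nonsingular, every projective module is nonsingular, forcing $Z_2(M)=0$ for every module $M$ (since $Z_2(M)$ is projective hence nonsingular, yet $Z_2(M)$ is by definition Goldie torsion, and a module that is both nonsingular and Goldie torsion is zero). Consequently $M=N$ is a nonsingular Rickart module for \emph{every} $M$.

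The final step is to convert ``every module is nonsingular and Rickart'' into semisimplicity. With $R$ right nonsingular, the condition that every module be Goldie Rickart coincides (by the discussion after the definition, where nonsingular Goldie Rickart equals Rickart) with every module being Rickart. I would then invoke that a ring over which every right module is Rickart, combined with every module being nonsingular, forces $R$ to be semisimple: nonsingularity of all modules already means every module has zero singular submodule, and applying this to injective hulls and essential extensions shows every essential extension is trivial, i.e. every module is a direct summand wherever it sits, which is precisely semisimplicity. The delicate point to verify carefully is the passage from ``every cyclic/singular module projective'' to $Z_2(R_R)=0$; I expect this to be where one must argue most carefully, possibly by exhibiting a concrete singular cyclic module whose projectivity is impossible unless the singular ideal is zero. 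Once nonsingularity is secured, the remainder follows smoothly from Theorem \ref{goldie} and the standard semisimplicity criterion.
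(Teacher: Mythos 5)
Your direction (2) $\Rightarrow$ (1) is fine and agrees in substance with the paper's. For (1) $\Rightarrow$ (2) you take a genuinely different route: the paper also reduces to showing every module is projective via the decomposition $M=Z_2(M)\oplus N$ of Theorem \ref{goldie}, but it disposes of the nonsingular summand $N$ by invoking Theorem \ref{dortlu} together with the Asgari--Haghany characterization of right $\Sigma$-$t$-extending rings, under which nonsingular modules are projective. Your plan instead tries to show that hypothesis (1) forces the Goldie torsion theory to vanish and yields semisimplicity directly, avoiding that external machinery. The plan is viable, but as written it has one genuine gap, precisely where you flag it: you never actually produce the argument that $Z_2(R_R)=0$. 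Moreover, the reasoning you sketch for ``a projective singular module is zero'' presupposes that $R$ is right nonsingular, which is what you are trying to establish; over $\mathbb{Z}/4\mathbb{Z}$, for instance, $R$ itself is a nonzero projective Goldie torsion module, so no argument of the form ``projective and Goldie torsion implies zero'' can be run before nonsingularity is secured.

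The gap is fillable, and filling it collapses most of the rest of your chain. Let $I$ be any essential right ideal of $R$. Then $R/I$ is singular, so $Z_2(R/I)=R/I$, and hypothesis (1) makes $R/I$ projective. Hence $0\rightarrow I\rightarrow R\rightarrow R/I\rightarrow 0$ splits and $I$ is a direct summand of $R$; an essential direct summand is the whole module, so $I=R$. Thus $R$ has no proper essential right ideals, and the usual complement argument (for a right ideal $J$ choose $K$ maximal with $J\cap K=0$; then $J\oplus K$ is essential, hence equals $R$) shows every right ideal is a direct summand, i.e.\ $R$ is semisimple. Note that this uses only the projectivity half of hypothesis (1) and renders the detour through ``every module is nonsingular and Rickart,'' the injective hulls, and Theorem \ref{goldie} unnecessary. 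So your instinct about where the real content lies is correct, and the singular cyclic modules you propose are the right test objects, but the proposal stops short of the one computation that makes the proof go.
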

\begin{proof}  (1) $\Rightarrow$ (2) Let $M$ be an $R$-module.
Since $M$ is Goldie Rickart, by Theorem \ref{goldie},
$M=Z_2(M)\oplus N$ for some nonsingular submodule $N$ of $M$. By
hypothesis $Z_2(M)$ is projective. Also by Theorem \ref{dortlu}
and \cite[Theorem 3.12]{AH}, $N$ is projective. Hence $M$ is
projective and so $R$ is semisimple due to \cite[Corollary
17.4]{AF}.

(2) $\Rightarrow$ (1) Let $M$ be an $R$-module and $f\in S$.
Consider the exact sequence $0\rightarrow
f^{-1}(Z_2(M))\rightarrow M\rightarrow M/f^{-1}(Z_2(M))\rightarrow
0$. By \cite[Proposition 13.9]{AF}, this exact sequence is split,
and so $M$ is Goldie Rickart due to Lemma \ref{sequence}. The rest
is clear from \cite[Corollary 17.4]{AF}.
\end{proof}

Recall that a  module $M$ is called {\it duo} if every submodule
of $M$ is fully invariant, i.e., for a submodule $N$ of $M$,
$f(N)\leq N$ for each $f\in S$.  Fully invariant submodules of
Goldie Rickart modules are also Goldie Rickart under some
conditions.

\begin{lem}\label{duo} Let $M$ be a Goldie Rickart module  and $N$ a fully invariant submodule of $M$.
If every endomorphism of $N$ can be extended to an endomorphism of
$M$, then $N$ is Goldie Rickart.
\end{lem}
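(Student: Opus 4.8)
The plan is to reduce the Goldie Rickart property of $N$ to that of $M$ by extending a given endomorphism of $N$ to $M$ and then restricting the associated projection idempotent back to $N$. First I would record the torsion-theoretic fact that $Z_2$ is a hereditary radical, so that $Z_2(N)=N\cap Z_2(M)$ for the submodule $N\leq M$. This holds because $Z_2(M)\cap N$ is a submodule of the Goldie torsion module $Z_2(M)$ and hence is itself Goldie torsion, giving $Z_2(M)\cap N\leq Z_2(N)$, while the reverse inclusion is immediate from $Z_2(N)\leq Z_2(M)$. (Note that this step needs $N$ only as a submodule, not as a fully invariant one.)

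Next, fix $g\in$ End$_R(N)$ and use the hypothesis to choose an extension $f\in S$ with $f_{|_N}=g$. The key computation is that $g^{-1}(Z_2(N))=N\cap f^{-1}(Z_2(M))$: since $N$ is fully invariant we have $f(N)\leq N$, so for $n\in N$ the condition $g(n)=f(n)\in Z_2(N)=N\cap Z_2(M)$ is equivalent to $f(n)\in Z_2(M)$, because $f(n)$ already lies in $N$. This identifies the submodule we must split inside $N$ with the trace on $N$ of the corresponding preimage in $M$.

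Now I would invoke that $M$ is Goldie Rickart: $f^{-1}(Z_2(M))$ is a direct summand of $M$, so there is an idempotent $e\in S$ with $e(M)=f^{-1}(Z_2(M))$. Here the full invariance of $N$ does the decisive work, since it forces $e(N)\leq N$, so that $e_{|_N}$ is an idempotent endomorphism of $N$. Its image is $e(N)=N\cap e(M)=N\cap f^{-1}(Z_2(M))$: the inclusion $\subseteq$ is clear, and for $\supseteq$ any $x\in N\cap f^{-1}(Z_2(M))$ satisfies $e(x)=x$ because $e$ fixes its own image. Combining with the previous paragraph, $e(N)=g^{-1}(Z_2(N))$, and being the image of an idempotent of End$_R(N)$ it is a direct summand of $N$. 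As $g$ was arbitrary, $N$ is Goldie Rickart.

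The main obstacle is the step where a direct summand of $M$ is intersected with $N$: in general such an intersection need not be a direct summand of $N$, and it is precisely the full invariance hypothesis, allowing the passage from the ambient idempotent $e$ to its restriction $e_{|_N}$, that rescues this. I expect the extension hypothesis to be used only to produce $f$, whereas full invariance is used twice, once to compute $g^{-1}(Z_2(N))$ and once to split it.
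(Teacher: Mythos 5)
Your proof is correct and follows essentially the same route as the paper: extend the given endomorphism to $f\in S$, take an idempotent $e\in S$ with $eM=f^{-1}(Z_2(M))$, and use full invariance to restrict $e$ to an idempotent of $\mathrm{End}_R(N)$ whose image is the required preimage, relying on the identity $Z_2(N)=N\cap Z_2(M)$. The only (harmless) slip is that $f(N)\leq N$ in your second paragraph already follows from $f_{|_N}=g\in\mathrm{End}_R(N)$ rather than from full invariance, which is genuinely needed only to get $e(N)\leq N$.
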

\begin{proof} Let $f\in $ End$_R(N)$. By hypothesis, there exists
$g\in S$ such that  $g_{\mid_N}=f$ and being $M$ Goldie Rickart,
there exists $e=e^2\in S$ such that $g^{-1}(Z_2(M))=eM$. Since $N$
is fully invariant, $e_{\mid_N}$ is an idempotent of End$_R(N)$.
We claim that $f^{-1}(Z_2(N))=e_{\mid_N}N$. Clearly,
$f^{-1}(Z_2(N))\subseteq e_{\mid_N}N$. In order to see other
inclusion, let $n\in N$. Then $fe_{\mid_N}n=gen\in Z_2(M)\cap
N=Z_2(N)$, and so $e_{\mid_N}n\in f^{-1}(Z_2(N))$. Hence we have
$e_{\mid_N}N\subseteq f^{-1}(Z_2(N))$. This implies that  $N$ is
Goldie Rickart.
\end{proof}

Recall that a module $M$ is called {\it quasi-injective} if  it is
$M$-injective. It is well known that every quasi-injective module
is a fully invariant submodule of its injective hull. By
considering this fact, we can say the next result as an immediate
consequence of Lemma \ref{duo}.

\begin{prop} Let $M$ be a quasi-injective module. If $E(M)$ is Goldie Rickart, then so is $M$.
\end{prop}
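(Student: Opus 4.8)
The plan is to invoke Lemma~\ref{duo} directly, so the main task is to verify that a quasi-injective module $M$ satisfies the two hypotheses of that lemma relative to its injective hull $E(M)$. First I would recall the two facts the paper has already supplied: $E(M)$ is assumed to be Goldie Rickart, and it is well known (as stated in the text preceding this proposition) that a quasi-injective module is a fully invariant submodule of its injective hull. Thus $M$ is a fully invariant submodule of the Goldie Rickart module $E(M)$, which handles the first hypothesis of Lemma~\ref{duo} with $M$ in the role of $N$ and $E(M)$ in the role of the ambient module.

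The remaining hypothesis to check is that every endomorphism of $M$ extends to an endomorphism of $E(M)$. Here I would use the injectivity of $E(M)$: given $f\in\,$End$_R(M)$, compose it with the inclusion $M\hookrightarrow E(M)$ to obtain a homomorphism $M\to E(M)$, and then extend along the inclusion $M\hookrightarrow E(M)$ using injectivity of $E(M)$ to produce $g\in\,$End$_R(E(M))$ with $g_{\mid_M}=f$. This is the standard extension argument for injective modules, and it supplies precisely the second hypothesis of Lemma~\ref{duo}.

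With both hypotheses verified, the conclusion that $M$ is Goldie Rickart is immediate from Lemma~\ref{duo}. I do not anticipate any genuine obstacle here: the proposition is essentially a packaging of Lemma~\ref{duo} together with the two standard properties of quasi-injective modules and their injective hulls. If anything warrants a moment's care, it is making sure the direction of the roles is stated correctly — the module $M$ plays the part of the fully invariant submodule $N$, while $E(M)$ is the Goldie Rickart overmodule — and confirming that the extension produced by injectivity is indeed an endomorphism of $E(M)$ (not merely a map into $E(M)$), which holds because its domain and codomain are both $E(M)$ once we extend along the inclusion.
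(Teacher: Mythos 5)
Your proposal is correct and follows exactly the paper's route: the paper presents this proposition as an immediate consequence of Lemma~\ref{duo}, using precisely the two facts you verify — that a quasi-injective module is fully invariant in its injective hull, and that endomorphisms of $M$ extend to $E(M)$ by injectivity. Your extra care about the direction of the roles and about the extension being a genuine endomorphism of $E(M)$ is sound but adds nothing beyond what the paper intends.
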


\begin{thm}\label{quasi-inj duo} Let $M$ be a quasi-injective duo module.  If $M$ is Goldie Rickart, then so is every submodule of
$M$.
\end{thm}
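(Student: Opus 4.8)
The plan is to reduce everything to Lemma \ref{duo}, whose two hypotheses---full invariance of the submodule and extendability of its endomorphisms---are each delivered by one of the standing assumptions on $M$. So I would fix an arbitrary submodule $N$ of $M$ and verify these two conditions in turn, after which Lemma \ref{duo} applies directly since $M$ is assumed Goldie Rickart.

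First, because $M$ is duo, every submodule of $M$ is fully invariant; in particular $N$ is fully invariant in $M$. This settles the first hypothesis of Lemma \ref{duo} at once. Second, I would establish that every endomorphism of $N$ extends to an endomorphism of $M$, and this is where quasi-injectivity enters. Given $f\in $ End$_R(N)$, compose it with the inclusion $\iota : N\hookrightarrow M$ to obtain a homomorphism $\iota f : N\to M$. Since $M$ is quasi-injective, i.e. $M$-injective, the map $\iota f$ extends to some $g\in S$ with $g_{\mid_N}=\iota f$. Because $f(N)\subseteq N$, the restriction $g_{\mid_N}$ takes values in $N$ and coincides with $f$ as an endomorphism of $N$. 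Hence $f$ extends to the endomorphism $g$ of $M$, which is exactly the second hypothesis of Lemma \ref{duo}.

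With both hypotheses confirmed, Lemma \ref{duo} yields that $N$ is Goldie Rickart, and since $N$ was an arbitrary submodule, every submodule of $M$ is Goldie Rickart.

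The only point I expect to require care is the passage from quasi-injectivity to the extension condition of Lemma \ref{duo}: quasi-injectivity is phrased as the extendability of homomorphisms \emph{into} $M$, whereas Lemma \ref{duo} asks that endomorphisms \emph{of} $N$ extend to endomorphisms of $M$. Bridging the two reduces to the observation that the extension $g$ automatically restricts to an endomorphism of $N$, precisely because $f(N)\subseteq N$. This is routine, so in truth there is no serious obstacle; once Lemma \ref{duo} is available, the result follows almost mechanically from the duo and quasi-injective hypotheses.
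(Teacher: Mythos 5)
Your proposal is correct and follows essentially the same route as the paper: both reduce the statement to Lemma \ref{duo} by using quasi-injectivity to extend an endomorphism of $N$ to one of $M$ and the duo hypothesis to guarantee that $N$ is fully invariant. The extra care you take in checking that the extension restricts back to $f$ on $N$ is sound but immediate, since the extension already agrees with $\iota f$ on $N$.
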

\begin{proof} Let $M$ be a Goldie Rickart module, $N$ a submodule of $M$  and  $f\in$ End$_R(N)$.
By quasi-injectivity of $M$, $f$ extends to an endomorphism $g$ of
$M$. Then $g^{-1}(Z_2(M)) = eM$ for some $e=e^2 \in S$. Since $N$
is fully invariant in $M$, the proof follows from Lemma \ref{duo}.
\end{proof}

\begin{prop}\label{dik toplan} Every direct summand of a Goldie Rickart module is
also Goldie Rickart.
\end{prop}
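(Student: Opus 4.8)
The plan is to argue directly from the definition of Goldie Rickart, rather than routing through the decomposition of Theorem~\ref{goldie}, so that no external fact about direct summands of Rickart modules is required. Write $M = D \oplus D'$, where $D$ is the given direct summand, and fix an arbitrary $f \in \mathrm{End}_R(D)$. The goal is to exhibit $f^{-1}(Z_2(D))$ as a direct summand of $D$. To bring the Goldie Rickart property of $M$ into play, I would first lift $f$ to an endomorphism of $M$: letting $\pi_D \colon M \to D$ denote the projection along $D'$, set $\hat f = f\pi_D \in S$, so that $\hat f$ restricts to $f$ on $D$ and kills $D'$.

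The second step is the computation of $\hat f^{-1}(Z_2(M))$. Here I would use the standard fact that $Z_2$ respects the direct-sum decomposition, i.e. $Z_2(M) = Z_2(D) \oplus Z_2(D')$, which follows by applying the definitions of $Z$ and then $Z_2$ componentwise; in particular $D \cap Z_2(M) = Z_2(D)$. Since $\hat f(d + d') = f(d) \in D$ for $d \in D$ and $d' \in D'$, the membership $\hat f(d+d') \in Z_2(M)$ is equivalent to $f(d) \in D \cap Z_2(M) = Z_2(D)$, and as this condition leaves the $D'$-component unconstrained it gives $\hat f^{-1}(Z_2(M)) = f^{-1}(Z_2(D)) \oplus D'$. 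Because $M$ is Goldie Rickart, the left-hand side is a direct summand of $M$, whence $f^{-1}(Z_2(D)) \oplus D'$ is a direct summand of $M$.

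The final step — the one I expect to be the main obstacle — is the descent: passing from the fact that $P \oplus D'$ is a direct summand of $M$ (with $P = f^{-1}(Z_2(D)) \le D$) to the conclusion that $P$ is a direct summand of $D$. I would isolate the general lemma that, for $P \le D$ and $M = D \oplus D'$, $P$ is a direct summand of $D$ if and only if $P \oplus D'$ is a direct summand of $M$. Writing $M = (P \oplus D') \oplus K$ and applying $\pi_D$, one checks that $D = P + \pi_D(K)$ and that this sum is direct: if $p = \pi_D(k) \in P \cap \pi_D(K)$ with $k \in K$, then $k = \pi_D(k) + (k - \pi_D(k)) \in P \oplus D'$, and $K \cap (P \oplus D') = 0$ forces $k = 0$, hence $p = 0$. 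This yields $D = P \oplus \pi_D(K)$, so $f^{-1}(Z_2(D))$ is a direct summand of $D$ for every $f \in \mathrm{End}_R(D)$, and $D$ is Goldie Rickart. The genuinely delicate point throughout is keeping the intersection and modularity bookkeeping honest — specifically verifying $D \cap Z_2(M) = Z_2(D)$ and the directness of $P + \pi_D(K)$ — rather than any deep structural input.
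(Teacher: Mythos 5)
Your proof is correct and takes essentially the same route as the paper's: extend $f\in\mathrm{End}_R(D)$ blockwise to an endomorphism of $M=D\oplus D'$ (the paper uses $f\oplus 1_{D'}$, giving preimage $f^{-1}(Z_2(D))\oplus Z_2(D')$, where you use $f\oplus 0$, giving $f^{-1}(Z_2(D))\oplus D'$), apply the Goldie Rickart hypothesis, and descend to $D$. The only substantive difference is that you make explicit the final descent step (that a summand of $M$ of the form $P\oplus D'$ with $P\le D$ forces $P$ to be a summand of $D$), which the paper leaves implicit.
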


\begin{proof} Let $M$ be a Goldie Rickart module and $N$ a direct
summand of $M$. There exists a submodule $K$ of $M$ with
$M=N\oplus K$. Let $f\in $ End$_R(N)$. Hence $f\oplus 1_K \in S$,
say $g=f\oplus 1_K$. Since $M$ is Goldie Rickart, $g^{-1}(Z_2(M))$
is a direct summand of $M$. On the other hand,
$g^{-1}(Z_2(M))=f^{-1}(Z_2(N))\oplus Z_2(K)$. Thus
$f^{-1}(Z_2(N))$ is a direct summand of $N$. Therefore $N$ is
Goldie Rickart.
\end{proof}

In comparison with Proposition \ref{dik toplan}, in general,  a
direct sum of Goldie Rickart modules may not be Goldie Rickart as
shown below.

\begin{ex}\label{direct-sum-ornek}{\rm  Let $R$ denote the ring of $2\times 2$ upper triangular matrices over $\Bbb Q[x]$,
$M$ right $R$-module $R$ and $e_{ij},(1\leq i,j\leq 2)$, $2\times
2$ matrix units. Consider the submodules $N = e_{11}R$ and $K =
e_{22}R$ of $M$. Then $M = N\oplus K$. Note that $M$ is a
nonsingular module and $S\cong R$. It is evident that $N$ and $K$
are Goldie Rickart. On the other hand, for $f=e_{11}2x+e_{12}x\in
S$, $r_M(f)=(-e_{12}x+ 2e_{22}x)R$ is not a direct summand of $M$.
Hence $M$ is not Rickart, therefore it is not Goldie Rickart.}
\end{ex}

Now we investigate some conditions about when direct sums of
Goldie Rickart modules are also Goldie Rickart, but more details
are in the last section.

\begin{prop}\label{degisti} Let $\{M_i\}_{i\in \mathcal{I}}$ be a class of  $R$-modules for an arbitrary index set
$\mathcal{I}$. If Hom$_R(M_i, M_j)=0$ for every $i, j\in
\mathcal{I}$ with $i\neq j$ (i.e., for every  $i\in \mathcal{I}$,
$M_i$ is a fully invariant submodule of $\bigoplus \limits_{i\in
\mathcal{I}} M_i$), then $\bigoplus \limits_{i\in \mathcal{I}}
M_i$ is Goldie Rickart if and only if $M_i$ is Goldie  Rickart for
every $i\in \mathcal{I}$.
\end{prop}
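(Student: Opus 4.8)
The plan is to exploit the fact that the hypothesis $\mathrm{Hom}_R(M_i,M_j)=0$ for $i\neq j$ forces every endomorphism of $M:=\bigoplus_{i\in\mathcal{I}}M_i$ to act diagonally, so that both the Goldie torsion submodule and the relevant preimages split as direct sums indexed by $\mathcal{I}$. Write $S=\mathrm{End}_R(M)$. The forward implication is immediate: each $M_i$ is a direct summand of $M$, so if $M$ is Goldie Rickart then so is every $M_i$ by Proposition \ref{dik toplan}.

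For the converse I would first record two structural facts. First, for any $f\in S$ and any index $i$, and any $j\neq i$, the composite $M_i\hookrightarrow M\xrightarrow{f}M\xrightarrow{\pi_j}M_j$ lies in $\mathrm{Hom}_R(M_i,M_j)=0$; hence $f(M_i)\subseteq M_i$, and $f$ restricts to an endomorphism $f_i:=f_{|_{M_i}}\in\mathrm{End}_R(M_i)$, so that $f$ acts coordinatewise as $f\big((m_i)_i\big)=(f_i(m_i))_i$. Second, $Z_2$ commutes with the direct sum: since $Z(M)=\bigoplus_i Z(M_i)$, passing to $M/Z(M)\cong\bigoplus_i M_i/Z(M_i)$ and applying the defining relation $Z_2(M)/Z(M)=Z(M/Z(M))$ yields $Z_2(M)=\bigoplus_i Z_2(M_i)$.

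Combining these, for $m=(m_i)_i\in M$ one has $f(m)\in Z_2(M)=\bigoplus_i Z_2(M_i)$ if and only if $f_i(m_i)\in Z_2(M_i)$ for every $i$, whence
$$f^{-1}(Z_2(M))=\bigoplus_{i\in\mathcal{I}}f_i^{-1}(Z_2(M_i)).$$
Since each $M_i$ is Goldie Rickart, $f_i^{-1}(Z_2(M_i))$ is a direct summand of $M_i$, say $M_i=f_i^{-1}(Z_2(M_i))\oplus K_i$. Taking the direct sum over $i$ gives $M=f^{-1}(Z_2(M))\oplus\bigoplus_i K_i$, so $f^{-1}(Z_2(M))$ is a direct summand of $M$ and $M$ is Goldie Rickart.

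I expect the only delicate points to be the two commutation statements rather than the final assembly. The diagonalization $f=\bigoplus_i f_i$ is exactly where the $\mathrm{Hom}$ hypothesis is indispensable and must be invoked with care, and the identity $Z_2(M)=\bigoplus_i Z_2(M_i)$, though standard, deserves an explicit justification through the two-step definition of $Z_2$. Once both are established, the computation of $f^{-1}(Z_2(M))$ and the splitting of $M$ are routine and hold verbatim for an arbitrary, possibly infinite, index set $\mathcal{I}$.
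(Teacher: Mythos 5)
Your proposal is correct and follows essentially the same route as the paper: the paper writes $f=(f_{ij})$ with $f_{ij}\in\mathrm{Hom}_R(M_j,M_i)$, notes that the off-diagonal entries vanish by hypothesis, and concludes $f^{-1}(Z_2(M))=\bigoplus_i f_{ii}^{-1}(Z_2(M_i))$ is a direct summand, which is exactly your computation. You merely make explicit the two commutation facts (diagonal action of $f$ and $Z_2(M)=\bigoplus_i Z_2(M_i)$) that the paper leaves implicit.
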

\begin{proof} The necessity is clear by Proposition \ref{dik
toplan}. Conversely, let $M=\bigoplus \limits_{i\in \mathcal{I}}
M_i$ and $f=(f_{ij})\in S$ where $f_{ij}\in $ Hom$_R(M_j, M_i)$.
Then $f_{ii}^{-1}(Z_2(M_i))$ is a direct summand of $M_i$ for each
$i\in \mathcal{I}$. On the other hand, we have
$f^{-1}(Z_2(M))=\bigoplus\limits_{i\in \mathcal{I}}
f_{ii}^{-1}(Z_2(M_i))$. Hence $f^{-1}(Z_2(M))$ is a direct summand
of $M$, as asserted.
\end{proof}

A ring is called {\it abelian} if all its idempotents are central.
A module is called {\it  abelian} if its endomorphism ring is
abelian. It is well known that a module $M$ is abelian if and only
if every direct summand of $M$ is fully invariant in $M$.

\begin{cor}\label{diktoplam} Let $\{M_i\}_{i\in \mathcal{I}}$ be a class of $R$-modules for an arbitrary index set
$\mathcal{I}$ and $\bigoplus\limits_{i\in \mathcal{I}} M_i$ an
abelian module. Then $\bigoplus \limits_{i\in \mathcal{I}} M_i$ is
Goldie Rickart if and only if $M_i$ is Goldie  Rickart for all
$i\in \mathcal{I}$.
\end{cor}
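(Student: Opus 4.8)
The plan is to reduce the statement directly to Proposition \ref{degisti}, whose hypothesis is precisely that each $M_i$ is a fully invariant submodule of the direct sum. So the only work is to check that the abelian assumption forces exactly this fully-invariance; everything else is already supplied by the proposition. First I would fix $i\in \mathcal{I}$ and observe that $M_i$ is a direct summand of $M=\bigoplus_{i\in \mathcal{I}} M_i$, with complement $\bigoplus_{j\neq i} M_j$. This holds regardless of whether $\mathcal{I}$ is finite or infinite, so no special care about cardinality is needed at this step.

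Next I would invoke the recalled characterization that a module is abelian if and only if every one of its direct summands is fully invariant. Since $M$ is assumed abelian and each $M_i$ is a direct summand of $M$, I would conclude that each $M_i$ is fully invariant in $M$. Then I would note that fully-invariance of every $M_i$ is equivalent to the vanishing Hom$_R(M_i, M_j)=0$ for all $i\neq j$: for $g\in$ Hom$_R(M_i, M_j)$ the composite $\iota_j\, g\, \pi_i\in S$ carries $M_i$ into $M_j$, and fully-invariance of $M_i$ together with $M_i\cap M_j=0$ forces $g=0$; conversely, the vanishing of all off-diagonal Hom-groups makes the $M_j$-component ($j\neq i$) of $\phi(M_i)$ zero for every $\phi\in S$, so $M_i$ is fully invariant. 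This is exactly the parenthetical equivalence already stated inside Proposition \ref{degisti}, so in the write-up it can simply be cited rather than reproved.

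Finally I would apply Proposition \ref{degisti}: with its fully-invariance (equivalently, $\mathrm{Hom}$-orthogonality) hypothesis now verified, it yields that $\bigoplus_{i\in \mathcal{I}} M_i$ is Goldie Rickart if and only if $M_i$ is Goldie Rickart for every $i\in \mathcal{I}$, which is the assertion. I do not expect any genuine obstacle here; the corollary is a direct specialization of Proposition \ref{degisti}. The only point deserving a moment's attention is that both the direct-summand-equals-fully-invariant characterization of abelian modules and the identification of $M_i$ as a summand remain valid for an arbitrary, possibly infinite, index set, which they do.
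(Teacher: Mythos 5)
Your proposal is correct and is exactly the intended argument: the paper states this as a corollary of Proposition \ref{degisti} without writing out a proof, the point being precisely that the abelian hypothesis makes each summand $M_i$ fully invariant in $\bigoplus_{i\in\mathcal{I}} M_i$, which is the (equivalent) Hom-vanishing hypothesis of that proposition. Your verification of the equivalence between fully-invariance of the $M_i$ and $\mathrm{Hom}_R(M_i,M_j)=0$ for $i\neq j$ is sound and applies for arbitrary index sets, so there is nothing to add.
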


\begin{prop}\label{kendi-dik} Let $M$ be a Goldie Rickart module with its endomorphism ring von
Neumann regular. Then any finite direct sum of copies of $M$ is
also Goldie Rickart.
\end{prop}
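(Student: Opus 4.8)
The plan is to combine the structure theorem for Goldie Rickart modules (Theorem \ref{goldie}) with the characterization of Theorem \ref{versus}, so that proving $M^n=\bigoplus_{i=1}^n M$ is Goldie Rickart reduces to two independent facts: that $M^n$ is Rickart and that $Z_2(M^n)$ is a direct summand of $M^n$. Indeed, by the implication $(2)\Rightarrow(1)$ of Theorem \ref{versus}, once we know $M^n$ is Rickart and $Z_2(M^n)$ splits off, Goldie Rickartness is immediate. This lets me avoid manipulating the preimages $f^{-1}(Z_2(M^n))$ directly for a general $f\in \mathrm{End}_R(M^n)$.

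First I would handle the splitting of the torsion part. Since $M$ is Goldie Rickart, Theorem \ref{goldie} gives a decomposition $M=Z_2(M)\oplus N$ with $N$ nonsingular (and Rickart). Taking $n$ copies yields $M^n=(Z_2(M))^n\oplus N^n$. Because the second singular submodule commutes with finite direct sums (as $Z(-)$ does, and hence so does the iterated construction $Z_2(-)$), we get $Z_2(M^n)=(Z_2(M))^n$, which is visibly a direct summand of $M^n$ with complement the nonsingular module $N^n$. This disposes of the torsion condition required by Theorem \ref{versus}$(2)$.

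Next I would establish that $M^n$ is Rickart. The endomorphism ring of a finite direct sum of copies of $M$ is the matrix ring $\mathrm{End}_R(M^n)\cong M_n(S)$. Since $S$ is von Neumann regular by hypothesis, and matrix rings over von Neumann regular rings are again von Neumann regular, $\mathrm{End}_R(M^n)$ is von Neumann regular. Then \cite[Corollary 3.2]{Wa}---the same result used in Examples \ref{ex}---applies to give that $M^n$ is Rickart. Combining this with the previous paragraph and invoking the implication $(2)\Rightarrow(1)$ of Theorem \ref{versus} finishes the proof.

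The main obstacle---or rather the key realizations that keep the argument short---are twofold. First, one must resist proving Rickartness of $M^n$ by hand (computing kernels of matrices of endomorphisms, which is precisely the delicate point whose failure is illustrated in Example \ref{direct-sum-ornek}); instead, the von Neumann regularity of $S$ is exactly what transports through the identification $\mathrm{End}_R(M^n)\cong M_n(S)$ and feeds into \cite{Wa}. Second, one needs the decomposition from Theorem \ref{goldie} together with the additivity $Z_2(M^n)=(Z_2(M))^n$ to know the torsion submodule splits. I would double-check that $Z_2$ genuinely commutes with the finite direct sum (via $Z(A\oplus B)=Z(A)\oplus Z(B)$ and the defining relation $Z_2(X)/Z(X)=Z(X/Z(X))$), since that additivity is the one routine verification on which the whole reduction rests.
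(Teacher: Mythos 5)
Your proof is correct and follows essentially the same route as the paper's: both rest on the decomposition $M=Z_2(M)\oplus N$ from Theorem \ref{goldie}, the identification of the endomorphism ring of a finite direct sum of copies with a matrix ring over a von Neumann regular ring, and the appeal to \cite[Corollary 3.2]{Wa}, together with the additivity of $Z_2$ over finite direct sums. The only cosmetic difference is that the paper applies the regularity argument to $N^n$ (via the corner ring $eSe$) and concludes with Theorem \ref{goldie}, whereas you apply it to $M^n$ directly and conclude with Theorem \ref{versus}; both are valid.
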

\begin{proof} Let $\mathcal{I}$ be a finite index set, assume
$\mathcal{I}=\{1, 2, \dots, n\}$. By Theorem \ref{goldie}, we have
$M=Z_2(M)\oplus N$ where $N$ is Rickart. Then End$_R(N)=eSe$ for
some idempotent $e\in S$, and so
End$_R(\bigoplus\limits_{\mathcal{I}} N)=M_n($End$_R(N))$.  Since
$S$ is von Neumann regular and the von Neumann regularity is
Morita invariant, End$_R(\bigoplus\limits_{\mathcal{I}} N)$ is von
Neumann regular. Hence $\bigoplus\limits_ {\mathcal{I}} N$ is
Rickart by \cite[Corollary 3.2]{Wa}. Also we have
$\bigoplus\limits_{\mathcal{I}} M= \bigoplus\limits_{\mathcal{I}}
Z_2(M)\oplus (\bigoplus\limits_{\mathcal{I}}
N)=Z_2(\bigoplus\limits_{\mathcal{I}} M) \oplus
(\bigoplus\limits_{\mathcal{I}} N)$. This implies that
$\bigoplus\limits_{\mathcal{I}} M$ is Goldie Rickart.
\end{proof}

\begin{lem}\label{SIP} Let $M$ be a Goldie Rickart module and $N$
a direct summand of $M$ which contains $Z_2(M)$. Then for any
direct summand $K$ of $M$, $N\cap K$ is also a direct summand of
$M$.
\end{lem}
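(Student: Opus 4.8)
The plan is to manufacture a single endomorphism whose preimage of $Z_2(M)$ is exactly $N\cap K$ together with a complement of $K$, and then read off the conclusion from the Goldie Rickart hypothesis. First I would fix idempotents recording the two summands: since $N$ is a direct summand of $M$ there is $e=e^2\in S$ with $eM=N$, and since $K$ is a direct summand there is $p=p^2\in S$ with $pM=K$; write $K'=(1-p)M$, so that $M=K\oplus K'$. The endomorphism I would work with is $f=(1-e)p\in S$.

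The heart of the argument is the computation of $f^{-1}(Z_2(M))$, and this is where the hypothesis $Z_2(M)\subseteq N$ is decisive. Because $Z_2(M)\subseteq N=eM$, we have $(1-e)M\cap Z_2(M)\subseteq (1-e)M\cap N=0$. Thus for any $m\in M$ the element $f(m)=(1-e)p(m)$ lies in $(1-e)M$, so $f(m)\in Z_2(M)$ forces $f(m)=0$, i.e. $p(m)\in eM=N$. Writing $m=p(m)+(1-p)(m)$ with $p(m)\in K$ and $(1-p)(m)\in K'$, the condition $p(m)\in N$ says precisely $p(m)\in N\cap K$. Hence I expect to obtain $f^{-1}(Z_2(M))=(N\cap K)\oplus K'$, the sum being direct since $N\cap K\subseteq K$ and $K\cap K'=0$.

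With this identity in hand the conclusion is immediate. Since $M$ is Goldie Rickart, $f^{-1}(Z_2(M))=(N\cap K)\oplus K'$ is a direct summand of $M$. Inside this summand, $N\cap K$ is itself a direct summand with complement $K'$, and a direct summand of a direct summand of $M$ is again a direct summand of $M$; therefore $N\cap K$ is a direct summand of $M$, as required.

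I would flag the middle paragraph as the only genuine obstacle: the whole proof hinges on choosing $f=(1-e)p$ (rather than, say, $ep$ or $p(1-e)$) and on verifying carefully that the containment $Z_2(M)\subseteq N$ upgrades the membership $f(m)\in Z_2(M)$ to the vanishing $f(m)=0$. Everything after the preimage computation is formal, and no appeal to Theorem \ref{goldie} is actually needed beyond the bare existence of the idempotents $e$ and $p$.
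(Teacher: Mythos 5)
Your proof is correct and follows essentially the same route as the paper: both apply the Goldie Rickart hypothesis to the endomorphism $(1-e)p$ (the paper writes $(1-e)f$) and establish the identity $((1-e)p)^{-1}(Z_2(M))=(N\cap K)\oplus(1-p)M$, using $Z_2(M)\subseteq eM$ to reduce the preimage condition to membership of $p(m)$ in $eM$. Your observation that $f(m)\in Z_2(M)$ actually forces $f(m)=0$ is just a slightly sharper phrasing of the paper's step $pm\in(1-e)^{-1}(Z_2(M))=eM$, so there is no substantive difference.
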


\begin{proof} Let $K$ be any direct summand of $M$. Then there
exist idempotents $e, f\in S$ such that $N=eM$ and $K=fM$. Since
$Z_2(M)\subseteq eM$, we have $eM=(1-e)^{-1}(Z_2(M))$. We claim
that $((1-e)f)^{-1}(Z_2(M))=(eM\cap fM)\oplus (1-f)M$. Let $m\in
((1-e)f)^{-1}(Z_2(M))$. Hence $fm\in (1-e)^{-1}(Z_2(M))$ and so
$m=fm+(1-f)m\in (eM\cap fM)\oplus (1-f)M$. This implies that
$((1-e)f)^{-1}(Z_2(M))\subseteq (eM\cap fM)\oplus (1-f)M$. For the
reverse inclusion, let $x+y\in (eM\cap fM)\oplus (1-f)M$. Thus
$(1-e)f(x+y)=(1-e)fx+(1-e)fy=(1-e)x+(1-e)f(1-f)y\in Z_2(M)$. Then
we have $(eM\cap fM)\oplus (1-f)M\subseteq ((1-e)f)^{-1}(Z_2(M))$.
Since $M$ is Goldie Rickart, $((1-e)f)^{-1}(Z_2(M))$ is a direct
summand of $M$. Therefore $eM\cap fM$ is a direct summand of $M$,
as required.
\end{proof}

By virtue of Lemma \ref{SIP}, we obtain the next result, and then
we give another characterization of Goldie Rickart modules.

\begin{prop}\label{sumint} Let $M$ be a Goldie Rickart module. Then $M$ has the
summand intersection property for direct summands which contain
$Z_2(M)$.
\end{prop}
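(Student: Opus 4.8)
The plan is to deduce this directly from Lemma \ref{SIP}, which already carries essentially all of the content. Recall that the summand intersection property for direct summands containing $Z_2(M)$ asks that whenever $N_1$ and $N_2$ are direct summands of $M$ with $Z_2(M)\subseteq N_1$ and $Z_2(M)\subseteq N_2$, the intersection $N_1\cap N_2$ is again a direct summand of $M$. So I would begin by fixing two such direct summands $N_1,N_2$ of $M$, each containing $Z_2(M)$.

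The key observation is that Lemma \ref{SIP} is in fact stronger than what is needed here: it guarantees that if $N$ is any direct summand containing $Z_2(M)$ and $K$ is an \emph{arbitrary} direct summand (with no constraint on $K$), then $N\cap K$ is a direct summand. Thus I would apply the lemma with $N=N_1$ and $K=N_2$; since $N_1$ contains $Z_2(M)$ and $N_2$ is a direct summand of $M$, the lemma immediately yields that $N_1\cap N_2$ is a direct summand of $M$. This establishes the summand intersection property for the prescribed class of summands.

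There is no genuine obstacle: the work has been front-loaded into Lemma \ref{SIP}, whose proof exhibits the concrete idempotent-based decomposition $((1-e)f)^{-1}(Z_2(M))=(eM\cap fM)\oplus(1-f)M$ and then invokes the Goldie Rickart hypothesis to conclude that this preimage, and hence $eM\cap fM$, is a summand. The only point meriting care is checking that the hypotheses of the lemma are matched in the right roles, that is, that only one of the two summands is actually required to contain $Z_2(M)$, so that no symmetry argument or further decomposition is necessary to finish.
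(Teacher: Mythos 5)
Your proposal is correct and matches the paper exactly: the authors also obtain this proposition as an immediate consequence of Lemma \ref{SIP}, applying it with $N=N_1$ and $K=N_2$ (and, as you note, only the first summand is actually required to contain $Z_2(M)$). No further argument is needed.
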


The converse of Proposition \ref{sumint} does not hold in general,
for example the module $M$ in Examples \ref{ex}(1) is Rickart and
so it has the summand intersection property by \cite[Proposition
2.16]{LRR}, but it is not Goldie Rickart.

\begin{thm}\label{finite} The following are equivalent for a module $M$.
\begin{enumerate}
\item [{\rm (1)}] $M$ is Goldie Rickart.
\item [{\rm (2)}] $t_M(I)$ is a direct summand of $M$ for each
finite subset $I$ of $S$.
\end{enumerate}
\end{thm}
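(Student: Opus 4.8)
The plan is to prove the equivalence by showing that statement (2) for arbitrary finite subsets reduces to the single-endomorphism condition defining Goldie Rickart. Recall that the paper has already established $f^{-1}(Z_2(M)) = t_M(Sf)$ for a single $f \in S$, and this is the bridge I intend to exploit. For a finite subset $I = \{f_1, \dots, f_n\} \subseteq S$, the submodule $t_M(I)$ is defined as $\{m \in M \mid f_i m \in Z_2(M) \text{ for all } i\}$, which equals $\bigcap_{i=1}^n f_i^{-1}(Z_2(M))$. So the entire content of the theorem is that $M$ is Goldie Rickart if and only if every such finite intersection is a direct summand.

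First I would prove the easy direction (2) $\Rightarrow$ (1): taking $I = \{f\}$ a singleton gives $t_M(\{f\}) = f^{-1}(Z_2(M))$ a direct summand for every $f \in S$, which is exactly the definition of Goldie Rickart. The substance is the forward direction (1) $\Rightarrow$ (2). Here I would argue by induction on $|I| = n$. The base case $n=1$ is the definition. For the inductive step, I want to realize $t_M(I) = \bigcap_{i=1}^n f_i^{-1}(Z_2(M))$ as a single preimage $g^{-1}(Z_2(M))$ for a cleverly chosen $g \in S$, so that Goldie Rickartness applies directly. A natural candidate, given that the paper works over $S = \mathrm{End}_R(M)$, is to use the fact that each $f_i^{-1}(Z_2(M))$ is a direct summand $e_i M$ with $e_i = e_i^2 \in S$ and $Z_2(M) \subseteq e_i M$, then combine these idempotents.

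The cleanest route is to invoke the summand intersection property for summands containing $Z_2(M)$, which is precisely Proposition \ref{sumint} (via Lemma \ref{SIP}). Each $f_i^{-1}(Z_2(M))$ is a direct summand of $M$ containing $Z_2(M)$, since $f_i(Z_2(M)) \subseteq Z_2(M)$ always holds. By Lemma \ref{SIP}, the intersection of such a summand with any direct summand is again a direct summand; iterating this $n-1$ times shows $\bigcap_{i=1}^n f_i^{-1}(Z_2(M)) = t_M(I)$ is a direct summand of $M$. This avoids constructing a single endomorphism and instead leans on the intersection machinery already developed.

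The main obstacle I anticipate is verifying that the intersection $\bigcap_{i=1}^n f_i^{-1}(Z_2(M))$ genuinely coincides with $t_M(I)$ and that Lemma \ref{SIP} can be applied repeatedly — specifically, one must check that intermediate intersections still contain $Z_2(M)$ so the lemma's hypothesis stays satisfied at each stage. Since $Z_2(M) \subseteq f_i^{-1}(Z_2(M))$ for every $i$ (because $f_i$ maps $Z_2(M)$ into $Z_2(M)$, a fully invariant submodule), every partial intersection also contains $Z_2(M)$, so the hypothesis of Lemma \ref{SIP} is preserved throughout the induction. This is the one point requiring care, but it follows from the fully invariant nature of $Z_2(M)$, and once it is secured the induction goes through cleanly.
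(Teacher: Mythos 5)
Your proof is correct and takes essentially the same route as the paper: induction on $|I|$, writing $t_M(I)$ as the intersection of the inductively-obtained summand $t_M(J)$ with $f_n^{-1}(Z_2(M))$, and invoking the summand intersection property for direct summands containing $Z_2(M)$ (Proposition \ref{sumint}, via Lemma \ref{SIP}). The point you flag as delicate --- that every partial intersection still contains $Z_2(M)$ because $Z_2(M)$ is fully invariant --- is precisely the observation the paper's proof makes before applying Proposition \ref{sumint}.
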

\begin{proof} (1) $\Rightarrow$ (2) Let $n\in \Bbb N$ and $I=\{f_1, f_2, \dots , f_n\}\subseteq
S$. For the proof, we apply induction on $n$. If $n=1$, then there
is nothing to show. Now let $n>1$ and suppose the claim holds for
$n-1$. Hence $t_M(J)$ is a direct summand of $M$ where $J=\{f_1,
f_2, \dots , f_{n-1}\}$. Clearly, we have $t_M(I)=t_M(J)\cap
{f_n}^{-1}(Z_2(M))$ and ${f_n}^{-1}(Z_2(M))$ is also a direct
summand of $M$ by (1). Since $t_M(J)$ and ${f_n}^{-1}(Z_2(M))$
contain $Z_2(M)$, by Proposition \ref{sumint}, $t_M(I)$ is a
direct summand of $M$.

(2) $\Rightarrow$ (1) Obvious.
\end{proof}

\begin{prop} Let $M$ be a Goldie Rickart and projective (injective) module.
Then for every direct summand $N$ of $M$, $Z_2(M)+N$ is also a
projective (injective) module.
\end{prop}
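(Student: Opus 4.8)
The plan is to reduce both assertions to a single structural fact: that $Z_2(M)+N$ can be realized as an \emph{internal} direct sum of two modules, each of which is a direct summand of $M$. Once this is in hand, projectivity and injectivity follow at once from the respective closure properties of direct summands. I would \emph{not} try to prove the (possibly stronger and in general false) statement that $Z_2(M)+N$ is itself a direct summand of $M$; recognizing that this is unnecessary is the conceptual crux.

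First I would record that $Z_2(M)$ is a direct summand of $M$: taking $f=1_M$ in the definition of Goldie Rickart gives $1_M^{-1}(Z_2(M))=Z_2(M)$, which is therefore a direct summand (this is also part of Theorem \ref{goldie}). Thus $Z_2(M)$ is a direct summand of $M$ that trivially contains $Z_2(M)$, and the given $N$ is a direct summand of $M$. Applying Lemma \ref{SIP} with the distinguished summand taken to be $Z_2(M)$ and with $K=N$ shows that $P:=Z_2(M)\cap N$ is a direct summand of $M$. Since $P\subseteq N$ and $P$ is a direct summand of $M$, a modular-law argument (write $M=P\oplus P'$ and use $N=P\oplus(N\cap P')$) shows that $P$ is a direct summand of $N$, say $N=P\oplus N_1$; and since $N$ is a direct summand of $M$, transitivity gives that $N_1$ is a direct summand of $M$ as well.

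Next I would verify the decomposition $Z_2(M)+N=Z_2(M)\oplus N_1$. Because $P\subseteq Z_2(M)$ and $N=P\oplus N_1$, we have $Z_2(M)+N=Z_2(M)+N_1$. This sum is direct: $Z_2(M)\cap N_1\subseteq Z_2(M)\cap N=P$, while $P\cap N_1=0$, so $Z_2(M)\cap N_1=0$. Hence $Z_2(M)+N=Z_2(M)\oplus N_1$, with \emph{both} $Z_2(M)$ and $N_1$ direct summands of $M$.

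Finally I would split into the two cases off this common decomposition. If $M$ is projective, then $Z_2(M)$ and $N_1$, being direct summands of $M$, are projective, and a finite direct sum of projective modules is projective; hence $Z_2(M)+N$ is projective. If $M$ is injective, then $Z_2(M)$ and $N_1$ are injective, and a finite direct sum of injective modules is injective (a standard fact); hence $Z_2(M)+N$ is injective. The main obstacle is purely the decomposition step of the second paragraph: the leverage comes entirely from Lemma \ref{SIP}, which supplies $Z_2(M)\cap N$ as a direct summand and thereby makes the ``peeling off'' of $N_1$ possible; everything after that is a routine application of the closure of direct summands under the relevant property.
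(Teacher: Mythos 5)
Your proof is correct and follows essentially the same route as the paper: both use Lemma \ref{SIP} to realize $Z_2(M)\cap N$ as a direct summand of $M$ and then exhibit $Z_2(M)+N$ as an internal direct sum of direct summands of $M$ (the paper writes it as $Z_2(N)\oplus Z_2(L)\oplus(N\cap L)$, which is just your $Z_2(M)\oplus N_1$ with $Z_2(M)$ split further). Your two-piece decomposition is in fact a slightly cleaner packaging of the same argument.
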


\begin{proof} Let $N$ be a direct summand of $M$. By Theorem
\ref{goldie}, $M=Z_2(M)\oplus K$ for some submodule $K$ of $M$.
Then $Z_2(M)\cap N=Z_2(N)$ is also a direct summand of $M$ due to
Lemma \ref{SIP}, let $M=Z_2(N)\oplus L$ for some submodule $L$ of
$M$. Hence $Z_2(M)=Z_2(N)\oplus Z_2(L)$ and $N=Z_2(N)\oplus (N\cap
L)$. It follows that $Z_2(M)+N=Z_2(N)\oplus Z_2(L)\oplus (N\cap
L)$. Since $M$ is projective (injective), $Z_2(N), Z_2(L)$ and
$N\cap L$ is also projective (injective), and so $Z_2(M)+N$ is
projective (injective).
\end{proof}

\begin{lem}\label{Zler} Let $R$ be a ring, $M$  an $R$-module and $N$ a submodule of $M$.
Then $(Z(M)+N)/N\subseteq Z(M/N)$
and $(Z_2(M)+N)/N\subseteq Z_2(M/N)$. Moreover, if $R$ is a ring
without zero divisors and the submodule $N$ is a torsion
$R$-module, then $(Z(M)+N)/N=Z(M/N)$ and $(Z_2(M)+N)/N=Z_2(M/N)$.
\end{lem}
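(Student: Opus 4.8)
The plan is to reduce every containment to one elementary observation: for any module $M$, an element $x$ lies in $Z_2(M)$ if and only if $xJ\subseteq Z(M)$ for some essential right ideal $J$ of $R$, which is immediate from $Z_2(M)/Z(M)=Z(M/Z(M))$. Granting this, the two inclusions are formal. For the first, if $z\in Z(M)$ with $zI=0$ for some essential right ideal $I$, then $(z+N)I=0$ in $M/N$, so $z+N\in Z(M/N)$; as every coset in $(Z(M)+N)/N$ has such a representative, this gives $(Z(M)+N)/N\subseteq Z(M/N)$. Feeding this into the characterization yields the second: if $m\in Z_2(M)$ and $mJ\subseteq Z(M)$ with $J$ essential, then $(m+N)J\subseteq (Z(M)+N)/N\subseteq Z(M/N)$, so $m+N\in Z_2(M/N)$.

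For the equalities I would assume $R$ has no zero divisors and $N$ is torsion and prove the reverse inclusions. Two consequences of the hypotheses are used repeatedly: every essential right ideal is nonzero (take the test ideal to be $R$), and the product of two nonzero elements of $R$ is nonzero. The crux is an annihilator computation. Given $\bar x=x+N\in Z(M/N)$, pick an essential right ideal $I$ with $xI\subseteq N$ and set $L=\{s\in R\mid xs=0\}$. To see $L$ is essential, take any nonzero right ideal $J$; essentiality of $I$ provides $0\neq i\in I\cap J$, and since $xi\in N$ is torsion there is $0\neq s\in R$ with $(xi)s=0$, so $is\in L$, while $is\neq 0$ and $is\in iR\subseteq J$ force $J\cap L\neq 0$. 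Hence $xL=0$ with $L$ essential, so $x\in Z(M)$ and $Z(M/N)\subseteq (Z(M)+N)/N$.

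The $Z_2$-equality runs by the same mechanism one level up, using the $Z$-equality just established. Starting from $\bar x\in Z_2(M/N)$, the characterization together with $Z(M/N)=(Z(M)+N)/N$ supplies an essential right ideal $I$ with $xI\subseteq Z(M)+N$. Now set $L=\{s\in R\mid xs\in Z(M)\}$. For $0\neq i\in I$ write $xi=z_i+n_i$ with $z_i\in Z(M)$ and $n_i\in N$, and choose $0\neq s_i$ with $n_is_i=0$; then $x(is_i)=z_is_i\in Z(M)$, so $is_i\in L$ and $is_i\neq 0$. The same essentiality argument shows $L$ is essential, whence $xL\subseteq Z(M)$ and $x\in Z_2(M)$ by the characterization, giving $Z_2(M/N)\subseteq (Z_2(M)+N)/N$.

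I expect the only genuine obstacle to be the verification that the annihilator ideals $L$ are essential; the rest is bookkeeping with cosets. The delicate point is that the no-zero-divisor hypothesis is precisely what keeps the witness $is$ (respectively $is_i$) nonzero and inside the arbitrary test ideal $J$, which is how the torsion of $N$ gets converted into essentiality of $L$. It is worth noting that this route needs neither commutativity nor an Ore condition, and it never requires knowing in advance that $Z(M)$ or $Z(M)+N$ is itself torsion.
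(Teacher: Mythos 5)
Your proof is correct and follows essentially the same route as the paper: the paper's key (unexpanded) claim that $r_R(m)$ is essential in $r_R(m+N)$ when $N$ is torsion over a domain is exactly your verification that the annihilator ideals $L$ are essential, and your characterization of $Z_2$ via $xJ\subseteq Z(M)$ for an essential $J$ just makes explicit the step the paper dismisses with ``this implies.'' No gaps; you have merely supplied the details the paper omits.
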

\begin{proof} It is easy to see that if
$x\in Z(M)$, then $x+N \in Z(M/N)$, and if $x\in Z_2(M)$, then
$x+N \in Z_2(M/N)$. Hence $(Z(M)+N)/N\subseteq Z(M/N)$ and
$(Z_2(M)+N)/N\subseteq Z_2(M/N)$. Now let $R$ be a ring without
zero divisors and the submodule $N$ a torsion $R$-module. Then for
any $m\in M$, $r_R(m)$ is essential in $r_R(m+N)$. It follows that
$Z(M/N)\subseteq (Z(M)+N)/N$, and so we have $(Z(M)+N)/N=Z(M/N)$.
This implies that $Z_2(M/N)\subseteq (Z_2(M)+N)/N$. Thus
$(Z_2(M)+N)/N=Z_2(M/N)$.
\end{proof}

\begin{lem}\label{kapsam} Let $M$ be a quasi-projective module
and $N$ a submodule of $M$. Then for each $\overline{f}\in $
End$_R(M/N)$, there exists $f\in S$ such that
$(f^{-1}(Z_2(M))+N)/N\subseteq \overline{f}^{-1}(Z_2(M/N))$.
Moreover, if $R$ is a ring without zero divisors and the submodule
$N$ is a torsion $R$-module, then
$(f^{-1}(Z_2(M))+N)/N=\overline{f}^{-1}(Z_2(M/N))$.
\end{lem}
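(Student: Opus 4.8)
The plan is to establish the two claims in turn, using quasi-projectivity to lift the given endomorphism $\overline{f}$ and then Lemma \ref{Zler} to identify the Goldie torsion submodules of $M/N$. First I would use quasi-projectivity: given $\overline{f}\in \mathrm{End}_R(M/N)$, consider the natural projection $\pi\colon M\to M/N$. Since $M$ is quasi-projective and $\pi$ is an epimorphism, the map $\overline{f}\pi\colon M\to M/N$ lifts through $\pi$, so there exists $f\in S$ with $\pi f=\overline{f}\pi$; that is, $f(m)+N=\overline{f}(m+N)$ for all $m\in M$. This commuting square is the only place quasi-projectivity is needed, and it is the engine for both inclusions.

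For the first (general) inclusion, I would take $m\in f^{-1}(Z_2(M))$, so $f(m)\in Z_2(M)$. Applying $\pi$ and the lifting relation gives $\overline{f}(m+N)=f(m)+N\in(Z_2(M)+N)/N$. By the first part of Lemma \ref{Zler}, $(Z_2(M)+N)/N\subseteq Z_2(M/N)$, hence $\overline{f}(m+N)\in Z_2(M/N)$, i.e. $m+N\in\overline{f}^{-1}(Z_2(M/N))$. This shows $(f^{-1}(Z_2(M))+N)/N\subseteq\overline{f}^{-1}(Z_2(M/N))$, noting that $N$ is automatically contained in the left side since $\pi(N)=0$.

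For the reverse inclusion under the extra hypotheses ($R$ a domain, $N$ torsion), I would reverse the computation and invoke the \emph{equality} half of Lemma \ref{Zler}, namely $(Z_2(M)+N)/N=Z_2(M/N)$. Take $m+N\in\overline{f}^{-1}(Z_2(M/N))$, so $\overline{f}(m+N)\in Z_2(M/N)=(Z_2(M)+N)/N$. Using $\overline{f}(m+N)=f(m)+N$, this means $f(m)+N\in(Z_2(M)+N)/N$, so $f(m)\in Z_2(M)+N$, whence $m\in f^{-1}(Z_2(M)+N)$. The remaining task is to check that $f^{-1}(Z_2(M)+N)$ and $f^{-1}(Z_2(M))+N$ agree modulo $N$; since $N\subseteq f^{-1}(Z_2(M)+N)$ is not automatic, I would verify that $f(N)\subseteq Z_2(M)+N$ by observing $\pi f(N)=\overline{f}\pi(N)=0\subseteq Z_2(M/N)$ and pulling back through the Lemma \ref{Zler} equality. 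Combined with the first inclusion, this yields $(f^{-1}(Z_2(M))+N)/N=\overline{f}^{-1}(Z_2(M/N))$.

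I expect the main obstacle to be the bookkeeping in this last reverse inclusion: the set $f^{-1}(Z_2(M)+N)$ is not literally $f^{-1}(Z_2(M))+N$, and one must pass carefully between preimages of $Z_2(M)$ versus $Z_2(M)+N$, leaning precisely on the equality $(Z_2(M)+N)/N=Z_2(M/N)$ that the domain-and-torsion hypotheses provide. The general inclusion and the lifting step are routine; the content of the lemma lives in aligning these preimages correctly.
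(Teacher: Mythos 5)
Your lifting step and the first inclusion match the paper's argument exactly: quasi-projectivity of $M$ gives $f\in S$ with $\pi f=\overline{f}\pi$, and then $fm\in Z_2(M)$ forces $\overline{f}(m+N)=fm+N\in (Z_2(M)+N)/N\subseteq Z_2(M/N)$ by the first half of Lemma \ref{Zler}. That half of your proposal is complete and is the same route the paper takes.

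The reverse inclusion, however, has a genuine gap precisely where you flag the difficulty. From $m+N\in\overline{f}^{-1}(Z_2(M/N))$ you correctly reach $fm\in Z_2(M)+N$, i.e.\ $m\in f^{-1}(Z_2(M)+N)$. But your proposed fix --- checking $f(N)\subseteq Z_2(M)+N$ via $\pi f(N)=\overline{f}\pi(N)=0$ --- only shows $N\subseteq f^{-1}(Z_2(M)+N)$; it does not convert an element of $f^{-1}(Z_2(M)+N)$ into an element of $f^{-1}(Z_2(M))+N$. Concretely, writing $fm=z+n'$ with $z\in Z_2(M)$ and $n'\in N$, you would need some $n\in N$ with $n'-fn\in Z_2(M)$, and nothing you have established produces such an $n$. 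The missing ingredient is that under the stated hypotheses $N$ itself is singular: for $n\in N$ and $0\neq r\in R$, the element $nr\in N$ is torsion, so $nrs=0$ for some $s\neq 0$, and $0\neq rs\in r_R(n)$ because $R$ has no zero divisors; hence $r_R(n)$ is essential in $R$ and $N\subseteq Z(M)\subseteq Z_2(M)$. (This is exactly what the proof of Lemma \ref{Zler} yields when specialized to elements of $N$.) With $N\subseteq Z_2(M)$ one has $Z_2(M)+N=Z_2(M)$, so $fm\in Z_2(M)$ outright, $m\in f^{-1}(Z_2(M))$, and the reverse inclusion follows. The paper's own proof makes the same silent jump (``Hence $m\in f^{-1}(Z_2(M))$''), but your write-up replaces it with an explicitly insufficient substitute, so the step as you describe it would not go through.
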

\begin{proof} Let $\overline{f}\in $ End$_R(M/N)$ and $\pi$ denote
the natural epimorphism from $M$ to $M/N$. Consider the following
diagram
$$\xymatrix{ M \ar@{.>}[d]_f \ar[r]^\pi & M/N  \ar[d]^{\overline{f}} \\
M \ar[r]_\pi & M/N\\
 }$$
 Since $M$ is quasi-projective, there exists $f\in S$ such that $\overline{f}\pi=\pi
 f$. Let $m\in f^{-1}(Z_2(M))$. Since $fm\in Z_2(M)$, by Lemma \ref{Zler},
$\overline{f}(m+N)=\overline{f}\pi (m)=\pi f(m)=fm+N\in Z_2(M/N)$.
Therefore $m+N\in \overline{f}^{-1}(Z_2(M/N))$, and so
$(f^{-1}(Z_2(M))+N)/N\subseteq \overline{f}^{-1}(Z_2(M/N))$. Let
$R$ be a ring without zero divisors and the submodule $N$ a
torsion $R$-module. Then for any $m+N\in
\overline{f}^{-1}(Z_2(M/N))$, by Lemma \ref{Zler}, we have
$\overline{f}(m+N)=fm+N\in (Z_2(M)+N)/N$. Hence $m\in
f^{-1}(Z_2(M))$, and so $\overline{f}^{-1}(Z_2(M/N))\subseteq
(f^{-1}(Z_2(M))+N)/N$.
\end{proof}

\begin{prop} Let $R$ be a ring without zero divisors and $M$ a
quasi-projective Goldie Rickart module. If $N$ is a fully
invariant submodule of $M$ and a torsion $R$-module, then $M/N$ is
also Goldie Rickart.
\end{prop}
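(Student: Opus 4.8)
The plan is to reduce the problem to the quotient module $M/N$ via the machinery already established in Lemma~\ref{kapsam}, using the hypotheses that $R$ has no zero divisors and $N$ is a torsion $R$-module so that the ``moreover'' clauses apply. Since $M$ is quasi-projective and $N$ is fully invariant in $M$, every endomorphism $\overline{f}$ of $M/N$ lifts to some $f\in S=\mathrm{End}_R(M)$ with $\overline{f}\pi=\pi f$, where $\pi\colon M\to M/N$ is the natural epimorphism. First I would fix an arbitrary $\overline{f}\in\mathrm{End}_R(M/N)$ and obtain such a lift $f$.

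Next I would invoke the equality half of Lemma~\ref{kapsam}: because $R$ has no zero divisors and $N$ is torsion, we have $\overline{f}^{-1}(Z_2(M/N))=(f^{-1}(Z_2(M))+N)/N$. Since $M$ is Goldie Rickart, $f^{-1}(Z_2(M))$ is a direct summand of $M$, say $M=f^{-1}(Z_2(M))\oplus L$ for some submodule $L$. The key step is to transport this decomposition through $\pi$. Because $N$ is fully invariant and $N\subseteq f^{-1}(Z_2(M))$ (as $fN\le N\le Z_2(M)$ using that $N$ is torsion over a domain, hence $N\le Z(M)\le Z_2(M)$), the submodule $N$ sits entirely inside the summand $f^{-1}(Z_2(M))$. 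Applying $\pi$ to $M=f^{-1}(Z_2(M))\oplus L$ therefore yields
$$M/N=\big(f^{-1}(Z_2(M))/N\big)\oplus\big((L+N)/N\big),$$
and the first factor is exactly $(f^{-1}(Z_2(M))+N)/N=\overline{f}^{-1}(Z_2(M/N))$. Hence $\overline{f}^{-1}(Z_2(M/N))$ is a direct summand of $M/N$, and since $\overline{f}$ was arbitrary, $M/N$ is Goldie Rickart.

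The main obstacle I anticipate is verifying cleanly that the direct sum decomposition of $M$ descends to a direct sum decomposition of $M/N$, i.e.\ that $N\subseteq f^{-1}(Z_2(M))$ so that modding out by $N$ does not disturb the complement $L$. One must confirm that $N$, being a torsion module over a domain $R$, lands in $Z(M)$ and hence in $Z_2(M)\subseteq f^{-1}(Z_2(M))$; this is where the hypotheses ``no zero divisors'' and ``$N$ torsion'' are genuinely used (precisely the standing assumptions of Lemma~\ref{kapsam}). A secondary technical point is ensuring $(L+N)/N\cong L/(L\cap N)$ really complements the first factor in $M/N$, which follows from $N\subseteq f^{-1}(Z_2(M))$ by a routine modular-law argument. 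Everything else is a direct citation of Lemma~\ref{kapsam} and the Goldie Rickart property of $M$.
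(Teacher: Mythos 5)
Your proof is correct, and it reaches the conclusion by a genuinely different final step than the paper's. Both arguments open the same way: lift $\overline{f}\in\mathrm{End}_R(M/N)$ to $f\in S$ by quasi-projectivity and invoke the equality clause of Lemma~\ref{kapsam} to identify $\overline{f}^{-1}(Z_2(M/N))$ with $(f^{-1}(Z_2(M))+N)/N$. The paper then writes $f^{-1}(Z_2(M))=eM$ for an idempotent $e\in S$ and uses the full invariance of $N$ (so that $e(N)\leq N$) to induce, via the Factor Theorem, an idempotent $\overline{e}$ of $M/N$ whose image is exactly $(eM+N)/N$, which exhibits it as a direct summand. You instead observe that a torsion module over a ring without zero divisors is singular, so $N\leq Z(M)\leq Z_2(M)\leq f^{-1}(Z_2(M))$, and then push the decomposition $M=f^{-1}(Z_2(M))\oplus L$ down to $M/N$ by the modular law. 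Your key containment $N\leq Z(M)$ does hold in the noncommutative setting: for $n\in N$ and $0\neq s$ in a right ideal $I$, the element $ns\in N$ is torsion, so $nst=0$ for some $t\neq 0$, whence $0\neq st\in I\cap r_R(n)$ and $r_R(n)$ is essential; this is the same computation that underlies Lemma~\ref{Zler}, so it is fully consistent with the paper's toolkit. The trade-off is instructive: the paper's idempotent-descent argument is where the hypothesis that $N$ is fully invariant is genuinely consumed, whereas your route never uses full invariance at all (Lemma~\ref{kapsam} requires only quasi-projectivity), so your proof in fact establishes the proposition without that hypothesis; the paper's version, on the other hand, isolates the idempotent $\overline{e}$ explicitly, which is sometimes convenient for further arguments.
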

\begin{proof} Let $\overline{f}\in $ End$_R(M/N)$. By Lemma
\ref{kapsam}, there exists $f\in S$ with $(f^{-1}(Z_2(M))+N)/N =
\overline{f}^{-1}(Z_2(M/N))$. Since $M$ is Goldie Rickart,
$f^{-1}(Z_2(M))=eM$ for some $e^2=e\in S$. Let $\pi$ denote the
natural epimorphism from $M$ to $M/N$ and consider the following
diagram
$$\xymatrix{ M \ar[d]_e \ar[r]^\pi & M/N  \ar@{.>}[d]^{\overline{e}} \\
M \ar[r]_\pi & M/N\\
 }$$
 Since $N$ is  fully invariant, by the Factor Theorem, there exists a unique homomorphism
 $\overline{e}\in $ End$_R(M/N)$  such that $\overline{e} \pi=\pi
 e$. It follows that $\overline{e}^2=\overline{e}$.
 Also $\overline{e}(M/N)=(f^{-1}(Z_2(M))+N)/N$. This completes the proof.
\end{proof}

\section{Applications : Goldie Rickart Rings}

In this section we study the concept of Goldie Rickart for the
ring case. A ring $R$ is called {\it right Goldie Rickart} if the
right $R$-module $R$ is Goldie Rickart, i.e., for any $a\in R$,
the right ideal $a^{-1}(Z_2(R_R)) = \{b\in R\mid ab\in Z_2(R_R)\}$
is a direct summand of $R$. As a consequence of Theorem
\ref{finite}, a ring $R$ is right Goldie Rickart if and only if
$t_R(I)$ is a direct summand of $R$ as a right ideal for each
finite subset $I$ of $R$.  Left Goldie Rickart rings are defined
similarly. Goldie Rickart rings are not left-right symmetric as
the following example shows.

\begin{ex}\label{left-right}{\rm Consider the ring $R=\left[\begin{array}{cc} \Bbb Z
& \Bbb Z_2 \\0 &\Bbb Z_2\end{array}\right]$ in \cite[(7.22)
Example]{L}. It is shown that $R$ is right nonsingular and
$Z(_RR)=\{0, x\}$ where $x=\left[\begin{array}{cc} 0 &
\overline{1} \\0 &0\end{array}\right]$. Also
$Z\big(_R(R/Z(_RR))\big)=\{ 0, \overline{m}\}$ where
$m=\left[\begin{array}{cc} 0 & 0 \\0
&\overline{1}\end{array}\right]\in R$, and so
$Z_2(_RR)=\left[\begin{array}{cc} 0 & \Bbb Z_2 \\0 &\Bbb
Z_2\end{array}\right]$. Thus $R=Z_2(_RR)\oplus
\left[\begin{array}{cc} \Bbb Z & 0 \\0 &0\end{array}\right]$. It
can be easily shown that $\left[\begin{array}{cc} \Bbb Z & 0 \\0
&0\end{array}\right]$  is a Rickart left $R$-module. Therefore $R$
is a left Goldie Rickart ring by Theorem \ref{goldie}. On the
other hand, for $y=\left[\begin{array}{cc} 2& \overline{0} \\0
&\overline{0}\end{array}\right]\in R$,
$r_R(y)=\left[\begin{array}{cc} 0 & \Bbb Z_2 \\0 &\Bbb
Z_2\end{array}\right]$ and it is not a direct summand of $R$ as a
right ideal. This implies that $R$ is not a right Rickart ring,
and so it is not right Goldie Rickart because of right
nonsingularity of $R$. }
\end{ex}

\begin{rem}{\rm
Clearly, every left (right) Rickart ring is left (right)
nonsingular, and so it is a left (right) Goldie Rickart ring. But
there is a left (right) Goldie Rickart ring which is not left
(right) Rickart. For instance, in Example \ref{left-right}, the
ring $R$ is left Goldie Rickart but not left Rickart. It is
obvious that a ring is left (right) Rickart if and only if it is
left (right) Goldie Rickart and left (right) nonsingular.}
\end{rem}

As in the following example, the Goldie Rickart property does not
pass on from a module to any its over module in general.

\begin{ex}{\rm Consider the ring $R$ in Example \ref{left-right} and let $M$ denote the right $R$-module $R$
and $N$ the submodule $\left[\begin{array}{cc} 0 & 0
\\0 &\Bbb Z_2\end{array}\right]$ of $M$. Note that $N=\left[\begin{array}{cc} 0 & 0\\0
&\overline{1}\end{array}\right]M$   for an idempotent
$\left[\begin{array}{cc} 0 & 0 \\0
&\overline{1}\end{array}\right]\in S$. Since  $N$ is a simple
module, obviously it is Rickart. Being $N$ nonsingular, it is also
Goldie Rickart. But it is known from Example \ref{left-right}, $M$
is not Goldie Rickart.  }
\end{ex}

According to Proposition \ref{dik toplan}, we have the next
result.
\begin{prop} Let $R$ be a right Goldie Rickart ring. Then for
every idempotent $e$ of $R$,  $eR$ is a Goldie Rickart module.
\end{prop}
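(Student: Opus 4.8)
The plan is to observe that this proposition is an essentially immediate application of Proposition \ref{dik toplan}, so the entire argument reduces to exhibiting $eR$ as a direct summand of the module $R_R$. First I would recall that, by the definition given at the start of this section, saying $R$ is right Goldie Rickart means precisely that the right $R$-module $R$ (that is, $R_R$) is a Goldie Rickart module. The goal is then to deduce that $eR$ inherits this property.

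The key step is the standard Peirce decomposition. For any idempotent $e \in R$, the element $1-e$ is also an idempotent, and one has the direct sum decomposition $R_R = eR \oplus (1-e)R$ as right $R$-modules: every $a \in R$ can be written as $a = ea + (1-e)a$, and $eR \cap (1-e)R = 0$ since any element of the intersection is both fixed and annihilated by left multiplication by $e$. Thus $eR$ is a direct summand of $R_R$.

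Having established that $eR$ is a direct summand of the Goldie Rickart module $R_R$, I would invoke Proposition \ref{dik toplan}, which states that every direct summand of a Goldie Rickart module is again Goldie Rickart. Applying it with $M = R_R$ and $N = eR$ yields immediately that $eR$ is a Goldie Rickart module, completing the proof.

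There is no real obstacle here: the only point requiring any verification is the direct-summand claim, and that is routine. The content of the proposition lies entirely in Proposition \ref{dik toplan}; this statement simply specializes that general module-theoretic fact to the cyclic summands $eR$ of the regular module, recording the ring-theoretic consequence.
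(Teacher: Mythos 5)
Your proof is correct and matches the paper's approach exactly: the paper also derives this proposition directly from Proposition \ref{dik toplan}, using the decomposition $R_R = eR \oplus (1-e)R$. Nothing is missing.
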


\begin{lem}\label{fgproj} Every finitely generated projective module over a von Neumann regular
ring is Goldie Rickart.
\end{lem}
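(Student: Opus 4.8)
The plan is to deduce the Goldie Rickart property from the ordinary Rickart property, exploiting the fact that everything in sight turns out to be nonsingular. First I would record that a von Neumann regular ring $R$ is right nonsingular. Given $a\in R$, choose $x\in R$ with $a=axa$ and set $f=xa$; then $f=f^2$, and a direct check gives $r_R(a)=r_R(f)=(1-f)R$, a direct summand of $R_R$. If $a\in Z(R_R)$, then $r_R(a)$ is an essential right ideal that is simultaneously a direct summand, which forces $r_R(a)=R$ and hence $a=0$. Thus $Z(R_R)=0$, and consequently $Z_2(R_R)=0$.

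Next I would show that $M$ itself is nonsingular. Being finitely generated projective, $M$ is a direct summand of a free module $R^n$; since $R_R$ is nonsingular, so is the finite direct sum $R^n$, and submodules of nonsingular modules are nonsingular. Hence $Z(M)=0$ and therefore $Z_2(M)=0$.

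The key step is to verify that $S=$ End$_R(M)$ is von Neumann regular. Writing $R^n=M\oplus M'$ and letting $e\in$ End$_R(R^n)$ be the projection onto $M$, one has End$_R(R^n)\cong M_n(R)$ and $S\cong e\,M_n(R)\,e$. Since von Neumann regularity is Morita invariant (so it passes to the matrix ring $M_n(R)$) and is inherited by corner rings $eTe$ of a regular ring $T$, it follows that $S$ is von Neumann regular.

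Finally I would invoke \cite[Corollary 3.2]{Wa} to conclude that $M$ is a Rickart module. Because $M$ is nonsingular, we have $r_M(f)=f^{-1}(Z_2(M))$ for every $f\in S$, so the Rickart and Goldie Rickart conditions coincide (equivalently, apply Theorem \ref{goldie} with $Z_2(M)=0$, giving $M=Z_2(M)\oplus M$ with $M$ a nonsingular Rickart module). Therefore $M$ is Goldie Rickart. I expect the main obstacle to be the regularity of $S$, namely identifying $S$ with the corner ring $eM_n(R)e$ and citing the stability of von Neumann regularity under matrix and corner constructions; the nonsingularity reductions are routine.
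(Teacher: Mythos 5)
Your proof is correct, but it takes a different route from the paper's. The paper argues at the level of the free module: since $R$ is von Neumann regular it is a right Goldie Rickart ring, so by Proposition \ref{kendi-dik} the finitely generated free module $F=R^n$ is Goldie Rickart, and then $M$ inherits the property as a direct summand via Proposition \ref{dik toplan}. You instead work directly with $M$: you establish $Z_2(M)=0$ (via right nonsingularity of regular rings and the fact that summands of $R^n$ are nonsingular), identify $S\cong e\,M_n(R)\,e$ as a corner of a matrix ring to get regularity of $S$, invoke \cite[Corollary 3.2]{Wa} for the Rickart property, and use that Rickart and Goldie Rickart coincide for nonsingular modules. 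Both arguments ultimately rest on the same ingredients (Morita invariance of regularity and Ware's result), but yours is more self-contained -- it bypasses the two structural propositions and makes the nonsingularity of $M$ explicit, at the cost of having to justify the corner-ring identification of $S$, a step the paper avoids by passing to summands only after the free module is handled. One cosmetic slip: your parenthetical ``$M=Z_2(M)\oplus M$'' should read $M=0\oplus M$ with $Z_2(M)=0$, i.e., you are applying Theorem \ref{goldie} with $N=M$; the intent is clear and the argument is unaffected.
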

\begin{proof} Let $R$ be a von Neumann regular ring and $M$  a finitely generated projective
$R$-module. Then $M$ is a direct summand of a finitely generated
free $R$-module $F$. We can see $F$ as $\bigoplus\limits_{i=1}^n
R_i$ where $n\in \Bbb N$ and $R_i=R$ for all $i=1, \dots, n$.
Since $R$ is von Neumann regular, it is a right Goldie Rickart
ring. Hence $F$ is also Goldie Rickart from Proposition
\ref{kendi-dik} and so is $M$ due to Proposition \ref{dik toplan}.
\end{proof}

\begin{prop} Every finitely presented module over a von Neumann regular
ring is Goldie Rickart.
\end{prop}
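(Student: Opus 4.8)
The plan is to reduce this statement to the previous lemma (Lemma \ref{fgproj}) by invoking the classical structural fact that, over a von Neumann regular ring, finite presentability forces projectivity. Concretely, I would first recall that a ring $R$ is von Neumann regular if and only if every finitely presented right $R$-module is projective; granting this, a finitely presented module is in particular finitely generated and projective, and Lemma \ref{fgproj} finishes the argument immediately.

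To make the reduction self-contained rather than a bare citation, I would spell out why a finitely presented module $M$ over a von Neumann regular $R$ is finitely generated projective. Choose a presentation $0 \rightarrow K \rightarrow R^n \rightarrow M \rightarrow 0$ in which the kernel $K$ is finitely generated (this is exactly what finite presentability provides). The key ring-theoretic input is that in a von Neumann regular ring every finitely generated right ideal is generated by an idempotent, and more generally every finitely generated submodule of a free module $R^n$ is a direct summand of $R^n$. Applying this to the finitely generated submodule $K \leq R^n$, we obtain $R^n = K \oplus C$ for some complement $C$, whence $M \cong R^n/K \cong C$ is a direct summand of $R^n$. Thus $M$ is finitely generated and projective.

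Having established that $M$ is finitely generated projective, I would simply apply Lemma \ref{fgproj}, which asserts that every finitely generated projective module over a von Neumann regular ring is Goldie Rickart, and conclude.

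The main obstacle, such as it is, lies not in any Goldie Rickart computation but in cleanly justifying the passage from ``finitely presented'' to ``finitely generated projective''. The delicate point is that this implication genuinely uses von Neumann regularity twice: once to know that finitely generated submodules of free modules split off, and once (implicitly, via the previous lemma) to control the endomorphism ring. If one prefers to avoid reproving the submodule-splitting fact, the honest alternative is to cite the standard characterization of von Neumann regular rings by the projectivity of finitely presented modules and proceed directly to Lemma \ref{fgproj}; either route collapses the remaining work to an appeal to the preceding result.
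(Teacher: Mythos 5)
Your proposal is correct and follows essentially the same route as the paper: both reduce to Lemma \ref{fgproj} by showing that a finitely presented module over a von Neumann regular ring is finitely generated projective. The only (immaterial) difference is in how that classical fact is justified --- the paper observes that $M$ is flat and that finitely presented flat modules are projective, whereas you split off the finitely generated kernel of a presentation $R^n \rightarrow M$ using the fact that finitely generated submodules of free modules over a von Neumann regular ring are direct summands; both justifications are standard and valid.
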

\begin{proof} Let $R$ be a von Neumann regular ring and $M$  a finitely
presented $R$-module. Then $M$ is a flat module. Since $M$ is
finitely presented, it is finitely generated and projective. Hence
Lemma \ref{fgproj} completes the proof.
\end{proof}

\begin{prop} Every abelian free module over a right Goldie Rickart ring is
Goldie Rickart.
\end{prop}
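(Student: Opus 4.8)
The plan is to reduce the statement ``Every abelian free module over a right Goldie Rickart ring is Goldie Rickart'' to the machinery already developed for direct sums. Let $R$ be a right Goldie Rickart ring and let $F$ be a free $R$-module that is abelian as a module, i.e. its endomorphism ring is abelian. Write $F=\bigoplus_{i\in\mathcal{I}}R_i$ with $R_i=R$ for every $i$ in some index set $\mathcal{I}$. Each summand $R_i$ is a copy of the right $R$-module $R$, which is Goldie Rickart precisely because $R$ is a right Goldie Rickart ring by definition.

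First I would invoke the characterization of abelian modules recalled just before Corollary \ref{diktoplam}: a module is abelian if and only if every direct summand of it is fully invariant. Since $F$ is abelian, each free summand $R_i$ is a direct summand and hence fully invariant in $F$. This is exactly the hypothesis needed to apply Corollary \ref{diktoplam} (equivalently Proposition \ref{degisti}) to the family $\{R_i\}_{i\in\mathcal{I}}$: for an abelian direct sum, $\bigoplus_{i\in\mathcal{I}}R_i$ is Goldie Rickart if and only if each $R_i$ is Goldie Rickart. Because each $R_i$ is Goldie Rickart, the ``if'' direction gives that $F$ is Goldie Rickart.

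The only point requiring a small argument is that the abelian hypothesis on $F$ transfers to the setting of Corollary \ref{diktoplam}, whose statement already presumes $\bigoplus_{i\in\mathcal{I}}M_i$ is abelian; here the family happens to be $M_i=R_i$ and the ambient module $\bigoplus R_i$ is $F$ itself, so the hypothesis is met verbatim. I would therefore expect the main (indeed only) obstacle to be confirming that each copy $R_i$ is genuinely Goldie Rickart as a right $R$-module, which is immediate from the definition of a right Goldie Rickart ring. No separate computation of $Z_2$ or of annihilators is needed, since Corollary \ref{diktoplam} packages all of that.

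In summary, the proof is a one-line application: by the abelian assumption every $R_i$ is fully invariant in $F$, so Corollary \ref{diktoplam} applies, and since each $R_i=R$ is Goldie Rickart by definition of a right Goldie Rickart ring, $F=\bigoplus_{i\in\mathcal{I}}R_i$ is Goldie Rickart. I would write it out in exactly this order—set up the free decomposition, cite the abelian characterization to obtain full invariance of the summands, then cite Corollary \ref{diktoplam} together with the definition of right Goldie Rickart ring to conclude.
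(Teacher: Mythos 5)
Your proposal is correct and follows exactly the paper's own argument: decompose $F=\bigoplus_{i\in\mathcal{I}}R_i$ with each $R_i=R$ Goldie Rickart by definition, and apply Corollary \ref{diktoplam} using the abelian hypothesis on $F$. The extra remarks about full invariance of the summands merely spell out why Corollary \ref{diktoplam} applies, which the paper leaves implicit.
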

\begin{proof} Let $R$ be a right Goldie Rickart ring and
$F$ an abelian free $R$-module. Assume that $F$ is $\bigoplus
\limits_{i\in \mathcal{I}} R_i$ where $\mathcal{I}$ is any index
set and $R_i=R$ for all $i\in \mathcal{I}$. Being $R$ Goldie
Rickart as an $R$-module, $F$ is Goldie Rickart from Corollary
\ref{diktoplam}.
\end{proof}

The following theorem gives a characterization of right Goldie
Rickart rings in terms of Goldie Rickart modules.

\begin{thm} Let $R$ be a ring and consider the following
conditions.
\begin{enumerate}
\item[{\rm (1)}] Every $R$-module is Goldie Rickart.
\item[{\rm (2)}] Every nonsingular $R$-module is Rickart and
$Z_2(R_R)$ is a direct summand of $R$.
\item[{\rm (3)}] Every projective $R$-module is Goldie Rickart.
\item[{\rm (4)}] Every free $R$-module is Goldie Rickart.
\item[{\rm (5)}] $R$ is a right Goldie Rickart ring.
\item[{\rm (6)}] Every cyclic projective $R$-module is Goldie Rickart.
\end{enumerate}
Then {\rm (1) $\Rightarrow$ (2) $\Rightarrow$ (3)
$\Leftrightarrow$ (4) $\Rightarrow$ (5) $\Leftrightarrow$ (6)}.
\end{thm}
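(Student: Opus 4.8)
The plan is to verify the implications in the displayed cyclic order, using Theorem~\ref{goldie} throughout to replace ``$M$ is Goldie Rickart'' by the structural statement $M=Z_2(M)\oplus N$ with $N$ a nonsingular Rickart module. For (1)~$\Rightarrow$~(2), I would first observe that a nonsingular module $N$ has $Z_2(N)=0$, whence $f^{-1}(Z_2(N))=r_N(f)$ for every $f\in\mathrm{End}_R(N)$; thus $N$, being Goldie Rickart by (1), is exactly Rickart. Applying (1) to the module $R_R$ and invoking Theorem~\ref{goldie} writes $R=Z_2(R_R)\oplus T$, so that $Z_2(R_R)$ is a direct summand of $R$, completing (2).

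The substantive step is (2)~$\Rightarrow$~(3). First I would show that every free module $F=R^{(\mathcal I)}$ is Goldie Rickart. Since the second singular submodule commutes with direct sums, $Z_2(F)=Z_2(R_R)^{(\mathcal I)}$; using the decomposition $R=Z_2(R_R)\oplus T$ supplied by (2) gives $F=Z_2(R_R)^{(\mathcal I)}\oplus T^{(\mathcal I)}=Z_2(F)\oplus T^{(\mathcal I)}$. As $T\cong R/Z_2(R_R)$ is nonsingular, so is $T^{(\mathcal I)}$, and the hypothesis in (2) makes it Rickart; Theorem~\ref{goldie} then yields that $F$ is Goldie Rickart. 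A projective module $M$ is a direct summand of such an $F$, so Proposition~\ref{dik toplan} gives that $M$ is Goldie Rickart.

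The remaining implications are formal. For (3)~$\Leftrightarrow$~(4), free modules are projective, giving (3)~$\Rightarrow$~(4), while (4)~$\Rightarrow$~(3) follows since a projective module is a direct summand of a free one and Proposition~\ref{dik toplan} preserves the property. For (4)~$\Rightarrow$~(5), the module $R_R$ is free, hence Goldie Rickart, which is exactly the definition of $R$ being right Goldie Rickart. Finally, for (5)~$\Leftrightarrow$~(6), a cyclic projective module is isomorphic to $eR$ for an idempotent $e\in R$, so the earlier proposition that $eR$ is Goldie Rickart over a right Goldie Rickart ring gives (5)~$\Rightarrow$~(6); conversely $R_R$ is itself cyclic projective, so (6)~$\Rightarrow$~(5) is immediate.

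I expect the main obstacle to lie in the free-module computation inside (2)~$\Rightarrow$~(3): one must carefully justify that $Z_2$ distributes over the (possibly infinite) direct sum and that the complement $T^{(\mathcal I)}$ is genuinely nonsingular, since only then does the Rickart hypothesis on nonsingular modules apply and Theorem~\ref{goldie} deliver the decomposition. The rest of the argument reduces to the direct-summand stability in Proposition~\ref{dik toplan} and to the identification of cyclic projective modules with the summands $eR$ of $R$.
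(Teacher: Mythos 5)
Your proposal is correct and follows essentially the same route as the paper: the key step (2) $\Rightarrow$ (3) goes through free modules exactly as in the paper's (2) $\Rightarrow$ (4) $\Rightarrow$ (3), using the decomposition $F=Z_2(F)\oplus T^{(\mathcal I)}$ and Theorem \ref{goldie}, and the remaining implications are handled identically via Proposition \ref{dik toplan} and the identification of cyclic projectives with summands $eR$. Your explicit attention to $Z_2$ commuting with direct sums and the nonsingularity of the complement is a point the paper glosses over, but it is not a different argument.
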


\begin{proof} (1) $\Rightarrow$ (2) is clear by Theorem
\ref{goldie}. (3) $\Rightarrow$ (4) $\Rightarrow$ (5) and (6)
$\Rightarrow$ (5) are obvious.

(2) $\Rightarrow$ (4) Let $F$ be a free module. By hypothesis
$Z_2(R_R)$ is a direct summand of $R$, and so $Z_2(F)$ is a direct
summand of $F$. Let $F=Z_2(F)\oplus L$ for some submodule $L$ of
$F$. Since $L$ is nonsingular, it is Rickart. Then $F$ is Goldie
Rickart due to Theorem \ref{goldie}.

(4) $\Rightarrow$ (3) Let $P$ be a projective module. Then there
exists a free module $F$ and a submodule $K$ of $F$ such that
$P\cong F/K$. Hence $K$ is a direct summand of $F$. Let $F=K\oplus
N$ for some submodule $N$ of $F$. By (4), $F$ is Goldie Rickart
and due to Proposition \ref{dik toplan}, $N$ is Goldie Rickart and
so is $P$.

(5) $\Rightarrow$ (6) Let $M$ be a cyclic projective $R$-module.
Then $M\cong I$ for some direct summand right ideal $I$ of $R$.
Since $R$ is right Goldie Rickart, by Proposition \ref{dik
toplan}, $I$ is Goldie Rickart and so is $M$.
\end{proof}

\section{Relatively Goldie Rickart Modules}
Example \ref{direct-sum-ornek} shows that a direct sum of Goldie
Rickart modules need not be Goldie Rickart. In this section we
define relatively Goldie Rickart property in order to investigate
when are direct sums of Goldie Rickart modules also Goldie
Rickart.

\begin{df}\label{relative def} Let $M$ and $N$ be $R$-modules. $M$ is called {\it
$N$-Goldie Rickart} (or {\it relatively Goldie Rickart to $N$}) if
for every homomorphism $f: M\rightarrow N$, $f^{-1}(Z_2(N))$ is a
direct summand of $M$.
\end{df}

Note that in Definition \ref{relative def}, $Z_2(M)\leq
f^{-1}(Z_2(N))$. It is evident that a module $M$ is Goldie Rickart
if and only if it is $M$-Goldie Rickart.

\begin{thm}\label{relative} Let $M$ and $N$ be $R$-modules. Then $M$ is $N$-Goldie
Rickart if and only if for any direct summand $M_1$ of $M$ and any
submodule $N_1$ of $N$, $M_1$ is $N_1$-Goldie Rickart.
\end{thm}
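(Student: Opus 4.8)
The plan is to prove the substantive forward implication ($\Rightarrow$) directly, by transporting a homomorphism between the ``small'' modules $M_1, N_1$ up to a homomorphism between $M, N$, pulling back the Goldie Rickart decomposition, and then cancelling the common summand; the reverse implication is immediate. For ($\Leftarrow$), I would simply note that $M$ is a direct summand of itself and $N$ is a submodule of itself, so applying the hypothesis with $M_1 = M$ and $N_1 = N$ says precisely that $M$ is $N$-Goldie Rickart.

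For ($\Rightarrow$), fix a direct summand $M_1$ of $M$, write $M = M_1 \oplus M_2$ with projection $\pi \colon M \to M_1$, fix a submodule $N_1 \leq N$ with inclusion $\iota \colon N_1 \hookrightarrow N$, and take an arbitrary $g \colon M_1 \to N_1$; the goal is that $g^{-1}(Z_2(N_1))$ is a direct summand of $M_1$. The idea is to form $f = \iota g \pi \in \mathrm{Hom}_R(M,N)$ and compute $f^{-1}(Z_2(N))$. This computation rests on the auxiliary identity $Z_2(N_1) = N_1 \cap Z_2(N)$, which I would record first: it follows from $Z(N_1) = N_1 \cap Z(N)$ by unwinding the defining congruence $Z_2(X)/Z(X) = Z(X/Z(X))$ in both $N$ and $N_1$, i.e.\ it is just the heredity of the Goldie torsion radical. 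Granting it, for $m \in M$ we have $f(m) = g(\pi(m)) \in N_1$, so $f(m) \in Z_2(N)$ iff $g(\pi(m)) \in N_1 \cap Z_2(N) = Z_2(N_1)$ iff $\pi(m) \in g^{-1}(Z_2(N_1))$. Since $g^{-1}(Z_2(N_1)) \subseteq M_1$ and $\pi$ is the projection with kernel $M_2$, this yields $f^{-1}(Z_2(N)) = \pi^{-1}\bigl(g^{-1}(Z_2(N_1))\bigr) = g^{-1}(Z_2(N_1)) \oplus M_2$.

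Because $M$ is $N$-Goldie Rickart, $f^{-1}(Z_2(N)) = g^{-1}(Z_2(N_1)) \oplus M_2$ is a direct summand of $M = M_1 \oplus M_2$. The last step is the purely module-theoretic cancellation lemma: if $A \leq M_1$ and $A \oplus M_2$ is a direct summand of $M_1 \oplus M_2$, then $A$ is a direct summand of $M_1$. I would prove this by passing to $M/M_2$: writing $M = (A \oplus M_2) \oplus E$ and using that $M_2$ lies inside $A \oplus M_2$, one gets $M/M_2 = (A \oplus M_2)/M_2 \oplus (E + M_2)/M_2$, and under the canonical isomorphism $M_1 \cong M/M_2$ this exhibits $A$ as a direct summand of $M_1$. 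Taking $A = g^{-1}(Z_2(N_1))$ completes the proof.

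The main obstacle is not a single hard step but the bookkeeping: specifically, pinning down $Z_2(N_1) = N_1 \cap Z_2(N)$ so that the preimage under $g$ is genuinely $g^{-1}(Z_2(N_1))$, and the final cancellation of the common summand $M_2$, which is exactly where the hypothesis that $M_1$ is a \emph{direct summand} (rather than an arbitrary submodule) of $M$ is used.
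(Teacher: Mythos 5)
Your proposal is correct and follows essentially the same route as the paper: your $f=\iota g\pi$ is exactly the paper's $g=f\oplus 0_{\mid_{M_2}}$, both proofs hinge on the identity $Z_2(N_1)=N_1\cap Z_2(N)$ to get $f^{-1}(Z_2(N))=g^{-1}(Z_2(N_1))\oplus M_2$, and both finish by cancelling $M_2$. The only difference is that you spell out the cancellation lemma and the heredity of $Z_2$, which the paper leaves implicit.
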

\begin{proof} Let $M_1$ be a direct summand of $M$, $N_1$ a
submodule of $N$ and $f: M_1\rightarrow N_1$ a homomorphism. Then
$M=M_1\oplus M_2$ for some submodule $M_2$ of $M$, and so $f\oplus
0_{\mid_{M_2}} : M\rightarrow N$, say $g=f\oplus 0_{\mid_{M_2}}$.
Since $M$ is $N$-Goldie Rickart, $g^{-1}(Z_2(M))$ is a direct
summand of $M$. Now let $m_1+m_2\in g^{-1}(Z_2(M))$. It follows
that $g(m_1+m_2)=fm_1\in Z_2(N)\cap N_1=Z_2(N_1)$, hence
$g^{-1}(Z_2(M))\subseteq f^{-1}(Z_2(M_1))\oplus M_2$. Also for any
$m_1+m_2\in f^{-1}(Z_2(M_1))\oplus M_2$, we have
$g(m_1+m_2)=fm_1\in Z_2(N_1)\subseteq Z_2(N)$, and so
$f^{-1}(Z_2(M_1))\oplus M_2\subseteq g^{-1}(Z_2(M))$. Thus
$g^{-1}(Z_2(M))=f^{-1}(Z_2(M_1))\oplus M_2$ is a direct summand of
$M$. This implies that $f^{-1}(Z_2(M_1))$ is a direct summand of
$M_1$. Therefore $M_1$ is $N_1$-Goldie Rickart. The rest is clear.
\end{proof}

\begin{cor} Let $M$ be a module. Then the following are
equivalent.
\begin{enumerate}
\item[{\rm (1)}] $M$ is Goldie Rickart.
\item[{\rm (2)}] For any direct summand $N$ of $M$ and any
submodule $K$ of $M$, $N$ is $K$-Goldie Rickart.
\item[{\rm (3)}] For any direct summands $N$ and $K$ of $M$ and
any $f:M\rightarrow K$, ${f_{\mid_N}}^{-1}(Z_2(N))$ is a direct
summand of $N$.
\end{enumerate}
\end{cor}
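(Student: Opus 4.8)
The plan is to read off this corollary from Theorem \ref{relative} together with the remark, recorded just after Definition \ref{relative def}, that a module is Goldie Rickart if and only if it is $M$-Goldie Rickart (Goldie Rickart relative to itself). I would establish $(1)\Leftrightarrow(2)$ directly from that theorem, and then close the cycle with the two short implications $(2)\Rightarrow(3)$ and $(3)\Rightarrow(1)$; since $(1)\Leftrightarrow(2)$ is already in hand, this suffices for the full equivalence.

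For $(1)\Leftrightarrow(2)$, assume $(1)$, so that $M$ is $M$-Goldie Rickart. Applying Theorem \ref{relative} with the outer module taken to be $M$ itself, the direct summand called $M_1$ in that statement becomes an arbitrary direct summand $N$ of $M$ and the submodule called $N_1$ becomes an arbitrary submodule $K$ of $M$; the theorem then yields that $N$ is $K$-Goldie Rickart, which is precisely $(2)$. Conversely, specializing $(2)$ to $N=M$ (a direct summand of itself) and $K=M$ (a submodule of itself) gives that $M$ is $M$-Goldie Rickart, i.e.\ Goldie Rickart. Thus this equivalence needs nothing beyond invoking Theorem \ref{relative} and matching up the quantifiers.

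It remains to fold in $(3)$. For $(2)\Rightarrow(3)$, take direct summands $N$ and $K$ of $M$ and a homomorphism $f:M\rightarrow K$; the restriction $f_{\mid_N}$ is then a homomorphism $N\rightarrow K$, and since a direct summand $K$ is in particular a submodule of $M$, condition $(2)$ guarantees that $N$ is $K$-Goldie Rickart, whence ${f_{\mid_N}}^{-1}(Z_2(K))$ is a direct summand of $N$. For $(3)\Rightarrow(1)$ I would simply specialize to $N=K=M$, so that for every $f\in$ End$_R(M)$ the submodule ${f_{\mid_M}}^{-1}(Z_2(M))=f^{-1}(Z_2(M))$ is a direct summand of $M$, which is exactly the definition of Goldie Rickart.

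I do not expect any serious obstacle here, since the content is carried by Theorem \ref{relative}; the only points requiring care are the correct interpretation of $(3)$ -- namely that $f_{\mid_N}$ is to be read as a map into the codomain $K$, so that one takes the preimage of the Goldie torsion submodule $Z_2(K)$ of that codomain -- and the routine bookkeeping between the ``direct summand'' quantifier appearing in $(3)$ and the ``submodule'' quantifier appearing in $(2)$. The narrowing from arbitrary submodule codomains in $(2)$ to direct-summand codomains in $(3)$ is a mere restriction of scope and costs nothing, while the reverse information is recovered automatically through the already-established equivalence with $(1)$; hence no separate argument reconstructing general submodule codomains from direct-summand hypotheses is required.
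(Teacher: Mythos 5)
Your proof is correct and follows the route the paper intends: the corollary is stated there without proof as an immediate specialization of Theorem \ref{relative} (taking the second module equal to $M$), which is exactly your $(1)\Leftrightarrow(2)$, and your two trivial specializations handle $(3)$. Your reading of ${f_{\mid_N}}^{-1}(Z_2(N))$ in $(3)$ as the preimage of $Z_2(K)$, the Goldie torsion submodule of the codomain, is the right one; the paper's ``$Z_2(N)$'' there is evidently a misprint, since Definition \ref{relative def} takes the preimage of the second singular submodule of the codomain.
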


Recall that a module $M$ has {\it $C_2$ condition} if any
submodule $N$ of $M$ which is isomorphic to a direct summand of
$M$ is also a direct summand. Let $M$ and $N$ be $R$-modules. $M$
is called {\it $N$-$C_2$ (or relatively $C_2$ to $N$)} if any
submodule $K$ of $N$ which is isomorphic to a direct summand of
$M$ is a direct summand of $N$.  Hence $M$ has $C_2$ condition if
and only if  it is $M$-$C_2$. It is proved in \cite[Proposition
2.26]{LRR1} that for modules $M$ and $N$, $M$ is $N$-$C_2$ if and
only if for any direct summand $K$ of $M$ and any submodule $L$ of
$N$, $K$ is $L$-$C_2$.

\begin{thm}\label{iff}  Let $\{M_i\}_{i\in\mathcal{I}}$ be a class of $R$-modules where
$\mathcal{I} = \{1, 2,..., n\}$. Assume that $M_i$ is $M_j$-$C_2$
for all $i$, $j\in \mathcal{I}$. Then $\bigoplus \limits_{i\in
\mathcal{I}} M_i$ is a Goldie Rickart module if and only if $M_i$
is $M_j$-Goldie Rickart for all $i$, $j\in \mathcal{I}$.
\end{thm}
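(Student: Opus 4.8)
The plan is to prove both implications, with the reverse one carrying essentially all of the weight; throughout write $M=\bigoplus_{i\in\mathcal{I}}M_i$. For the forward implication I would use that a module is Goldie Rickart exactly when it is relatively Goldie Rickart to itself, so $M$ is $M$-Goldie Rickart. Since each $M_i$ is a direct summand of $M$ and each $M_j$ is a submodule of $M$, Theorem \ref{relative} (applied with the summand $M_i$ and the submodule $M_j$) immediately gives that $M_i$ is $M_j$-Goldie Rickart for all $i,j\in\mathcal{I}$. This direction needs nothing beyond Theorem \ref{relative} and uses neither finiteness of $\mathcal{I}$ nor the relative $C_2$ hypothesis.

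For the reverse implication the strategy is to strip off the Goldie torsion part and reduce to a statement about nonsingular Rickart modules. Each $M_i$ is Goldie Rickart (being $M_i$-Goldie Rickart), so by Theorem \ref{goldie} I may write $M_i=Z_2(M_i)\oplus N_i$ with $N_i$ a nonsingular Rickart module. Summing over the finite set $\mathcal{I}$, and using that $Z_2(-)$ and nonsingularity are preserved by finite direct sums, gives $M=\big(\bigoplus_i Z_2(M_i)\big)\oplus\big(\bigoplus_i N_i\big)=Z_2(M)\oplus N$ with $N=\bigoplus_i N_i$ nonsingular; by Theorem \ref{goldie} it then suffices to show that $N$ is Rickart. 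The crucial point is that all the hypotheses descend to the summands $N_i$: applying Theorem \ref{relative} to ``$M_i$ is $M_j$-Goldie Rickart'' with the summand $N_i\le M_i$ and the submodule $N_j\le M_j$ shows that $N_i$ is $N_j$-Goldie Rickart, which---$N_j$ being nonsingular---means precisely that $\ker f=r_{N_i}(f)$ is a direct summand of $N_i$ for every $f\colon N_i\to N_j$, i.e.\ the relative Rickart condition of \cite{LRR1}; and applying \cite[Proposition 2.26]{LRR1} to ``$M_i$ is $M_j$-$C_2$'' with the same summand and submodule shows that $N_i$ is $N_j$-$C_2$. Taking $i=j$ in the first statement, each $N_i$ is itself Rickart, and all the relative Rickart and relative $C_2$ conditions now hold throughout the family $\{N_i\}$.

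With these reductions in place, $N=\bigoplus_i N_i$ is a finite direct sum of Rickart modules that are pairwise relatively Rickart and pairwise relatively $C_2$, so the corresponding direct-sum theorem for Rickart modules from \cite{LRR1} yields that $N$ is Rickart. Feeding this back through Theorem \ref{goldie} shows that $M=Z_2(M)\oplus N$ is Goldie Rickart, completing the argument. I expect the main obstacle to be precisely the dependence of the reverse implication on the relative $C_2$ hypothesis: without it a direct sum of relatively Rickart modules need not be Rickart (Example \ref{direct-sum-ornek}), and the genuine difficulty---hidden inside the cited Rickart direct-sum theorem---is that when the \emph{domain} is a direct sum, the relevant preimage $f^{-1}(Z_2(M))\cap N$ is the graph of a partial map and splits off only after one upgrades a submodule that is merely isomorphic to a direct summand into an honest direct summand, which is exactly what $C_2$ provides. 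The remaining care needed is bookkeeping: checking that the transfer of hypotheses invokes only relative conditions already available (via Theorem \ref{relative} and \cite[Proposition 2.26]{LRR1}) and that the decompositions $M_i=Z_2(M_i)\oplus N_i$ assemble into the single decomposition $M=Z_2(M)\oplus N$ demanded by Theorem \ref{goldie}.
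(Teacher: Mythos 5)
Your proof is correct and follows essentially the same route as the paper's: split each $M_i$ as $Z_2(M_i)\oplus N_i$ via Theorem \ref{goldie}, transfer the relative Goldie Rickart and relative $C_2$ hypotheses to the nonsingular complements using Theorem \ref{relative} and \cite[Proposition 2.26]{LRR1}, and invoke the Rickart direct-sum theorem \cite[Theorem 2.29]{LRR1} before feeding the result back through Theorem \ref{goldie}. The only difference is cosmetic: the paper further refines the complements relative to the off-diagonal components $f_{12},f_{21}$ of an endomorphism, a step your cleaner organization shows to be unnecessary.
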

\begin{proof} The necessity is clear from Theorem \ref{relative}.
For the sufficiency, assume that $M_i$ is $M_j$-Goldie Rickart for
all $i$, $j\in \mathcal{I}$. Without loss of generality we may
assume $n=2$. Let $f = f_{11}+ f_{21}+f_{12}+f_{22}$ denote the
matrix representation of $f$ where $f_{ij}:M_j\rightarrow M_i$.
Then $M_1$ and  $M_2$ are Goldie Rickart, and so $M_1 =
Z_2(M_1)\oplus L_{11}$ and $M_2 = Z_2(M_2)\oplus L_{22}$. Note
that $L_{11}$ and $L_{22}$ are Rickart modules. Now for $f_{12}:
M_2\rightarrow M_1$, by assumption, $M_2 =
f^{-1}_{12}(Z_2(M_1))\oplus L_{12}$. Similarly, for $f_{21}:
M_1\rightarrow M_2$, by assumption, $M_1 =
f^{-1}_{21}(Z_2(M_2))\oplus L_{21}$. Since $Z_2(M_1)\leq
f^{-1}_{21}(Z_2(M_2))$ and $Z_2(M_2)\leq f^{-1}_{12}(Z_2(M_1))$,
we have $f^{-1}_{21}(Z_2(M_2)) = Z_2(M_1)\oplus
[f^{-1}_{21}(Z_2(M_2))\cap L_{11}]$ and $f^{-1}_{12}(Z_2(M_1)) =
Z_2(M_2)\oplus [f^{-1}_{12}(Z_2(M_1))\cap L_{22}]$. Then $M =
Z_2(M_1)\oplus Z_2(M_2)\oplus [f^{-1}_{21}(Z_2(M_2))\cap
L_{11}]\oplus [f^{-1}_{12}(Z_2(M_1))\cap L_{22}]\oplus
L_{12}\oplus L_{21}$. Note that $Z_2(M) = Z_2(M_1)\oplus Z_2(M_2)$
and so $[f^{-1}_{21}(Z_2(M_2))\cap L_{11}]\oplus
[f^{-1}_{12}(Z_2(M_1))\cap L_{22}]\oplus L_{12}\oplus L_{21}$ is
nonsingular. $[f^{-1}_{21}(Z_2(M_2))\cap L_{11}]\oplus L_{21}$ and
$[f^{-1}_{12}(Z_2(M_1))\cap L_{22}]\oplus L_{12}$ are relatively
$C_2$. Also $[f^{-1}_{21}(Z_2(M_2))\cap L_{11}]\oplus L_{21}$ and
$[f^{-1}_{12}(Z_2(M_1))\cap L_{22}]\oplus L_{12}$ are relatively
Goldie Rickart since they are direct summands of relatively Goldie
Rickart modules $M_1$ and $M_2$, respectively. By \cite[Theorem
2.29]{LRR1}, $[f^{-1}_{21}(Z_2(M_2))\cap L_{11}]\oplus
[f^{-1}_{12}(Z_2(M_1))\cap L_{22}]\oplus L_{12}\oplus L_{21}$ is
Rickart. Therefore, by Theorem \ref{goldie}, $M$ is Goldie
Rickart.
\end{proof}

It is well known that every module which its endomorphism ring is
von Neumann regular has $C_2$ condition. In the light of Theorem
\ref{iff}, we can weaken the von Neumann regular endomorphism ring
condition in Proposition \ref{kendi-dik} as in the following.

\begin{cor}\label{direct sum} Let $M$ be a Goldie Rickart module with $C_2$ condition. Then any finite
direct sum of copies of $M$ is also Goldie Rickart.
\end{cor}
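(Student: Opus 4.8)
The plan is to derive this as a direct specialization of Theorem \ref{iff}. First I would fix $n\in\Bbb N$, let $\mathcal{I}=\{1,2,\dots,n\}$, and set $M_i=M$ for every $i\in\mathcal{I}$, so that the given finite direct sum of copies of $M$ becomes $\bigoplus_{i\in\mathcal{I}}M_i$. With this choice all the pairwise relative conditions appearing in Theorem \ref{iff} collapse to conditions on the single module $M$, which is exactly what makes the theorem applicable.

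Next I would verify the two hypotheses needed to invoke Theorem \ref{iff}. For the relative $C_2$ hypothesis, since every $M_i$ equals $M$, the requirement ``$M_i$ is $M_j$-$C_2$ for all $i,j\in\mathcal{I}$'' is precisely ``$M$ is $M$-$C_2$'', which by definition is the $C_2$ condition on $M$; this is our standing assumption. For the relative Goldie Rickart input, ``$M_i$ is $M_j$-Goldie Rickart for all $i,j$'' likewise reduces to ``$M$ is $M$-Goldie Rickart'', and by the remark following Definition \ref{relative def} this is equivalent to $M$ being Goldie Rickart, which again holds by hypothesis.

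Having checked both hypotheses, the sufficiency direction of Theorem \ref{iff} immediately yields that $\bigoplus_{i\in\mathcal{I}}M_i$ is Goldie Rickart, that is, any finite direct sum of copies of $M$ is Goldie Rickart. The argument presents no genuine difficulty; the only point demanding care is the bookkeeping that translates the two-module relative notions ($N$-$C_2$ and $N$-Goldie Rickart) into their intrinsic single-module counterparts once all summands coincide, together with the elementary observations that $M$-$C_2$ is exactly the $C_2$ condition and that $M$-Goldie Rickart is exactly Goldie Rickart.
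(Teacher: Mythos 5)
Your proposal is correct and is exactly the route the paper intends: the corollary is stated as an immediate consequence of Theorem \ref{iff}, obtained by taking all $M_i=M$ so that the relative $C_2$ and relative Goldie Rickart hypotheses collapse to the $C_2$ condition and the Goldie Rickart property of $M$ itself. Your write-up simply makes explicit the bookkeeping the paper leaves implicit.
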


\begin{prop}\label{fgfree} Let $R$ be a right Goldie Rickart ring with $C_2$ condition as an $R$-module.
Then the following hold.
\begin{enumerate}
\item[{\rm (1)}] Every finitely generated free $R$-module is Goldie Rickart.
\item[{\rm (2)}] Every finitely generated projective $R$-module is Goldie
Rickart.
\end{enumerate}
\end{prop}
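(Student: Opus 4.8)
The plan is to reduce the statement to earlier results by exploiting the $C_2$ hypothesis. For part (1), a finitely generated free $R$-module is precisely a finite direct sum $\bigoplus_{i=1}^n R_i$ with each $R_i = R_R$. By hypothesis $R$ is right Goldie Rickart, which by definition means the module $R_R$ is Goldie Rickart, and $R_R$ has the $C_2$ condition. I would therefore apply Corollary \ref{direct sum} directly: since a finitely generated free module is a finite direct sum of copies of the single Goldie Rickart module $M = R_R$ which has $C_2$, Corollary \ref{direct sum} yields at once that $\bigoplus_{i=1}^n R_i$ is Goldie Rickart.

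For part (2), I would pass from free to projective modules using the fact that every finitely generated projective module is a direct summand of a finitely generated free module, together with Proposition \ref{dik toplan} (every direct summand of a Goldie Rickart module is Goldie Rickart). Concretely, let $P$ be finitely generated projective. Then there is a finitely generated free module $F$ with $F \cong P \oplus Q$ for some submodule $Q$. By part (1), $F$ is Goldie Rickart, so by Proposition \ref{dik toplan} its direct summand $P$ is Goldie Rickart as well.

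The only genuine subtlety is the transition from part (1) to part (2): one must be sure that the projective module $P$ really appears as a \emph{direct summand} of a finitely generated free module (not merely a quotient), which is the standard characterization of finitely generated projective modules, so this step is routine. I do not anticipate any serious obstacle, since the two key tools—Corollary \ref{direct sum} for finite direct sums of a $C_2$ Goldie Rickart module and Proposition \ref{dik toplan} for direct summands—have already been established and apply verbatim. The entire argument is thus a short chain of citations to prior results.
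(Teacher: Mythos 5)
Your proposal is correct and follows exactly the paper's own argument: part (1) is an immediate application of Corollary \ref{direct sum} to finite direct sums of copies of $R_R$, and part (2) follows from part (1) together with Proposition \ref{dik toplan} applied to a finitely generated projective module realized as a direct summand of a finitely generated free module. No gaps; the paper's proof is just a terser version of the same chain of citations.
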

\begin{proof} (1) Clear from Corollary \ref{direct sum}.

(2) The condition (1) and Proposition \ref{dik toplan} complete
the proof.
\end{proof}

\begin{prop}\label{SIP-relative} Let $\{M_i\}_{i\in\mathcal{I}}$ be a class of $R$-modules
for an index set $\mathcal{I}$ and $N$ an $R$-module. Then the
following hold.\\
{\rm (1)} If $N$ has the summand intersection property for direct
summands which contain $Z_2(N)$ and $\mathcal{I}$ is finite, then
$N$ is $\bigoplus \limits_{i\in \mathcal{I}} M_i$-Goldie Rickart
if and only if it is $M_i$-Goldie Rickart for
all $i\in I$.\\
{\rm (2)} If $N$ has the strong summand intersection property for
direct summands which contain $Z_2(N)$, then $N$ is $\bigoplus
\limits_{i\in \mathcal{I}} M_i$-Goldie Rickart if and only if it
is $M_i$-Goldie Rickart for all $i\in I$ where $\mathcal{I}$ is arbitrary. \\
{\rm (3)} If $N$ has the strong summand intersection property for
direct summands which contain $Z_2(N)$, then $N$ is $\prod
\limits_{i\in \mathcal{I}} M_i$-Goldie Rickart if and only if it
is $M_i$-Goldie Rickart for all $i\in I$ where $\mathcal{I}$ is
arbitrary.
\end{prop}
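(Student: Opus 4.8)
The plan is to prove all three parts by the same mechanism: a homomorphism into a direct sum or direct product of the $M_i$ decomposes into components $f_i : N \to M_i$, and I want to express $f^{-1}(Z_2(\bigoplus M_i))$ (or the product version) as an intersection of the pullbacks $f_i^{-1}(Z_2(M_i))$, each of which is a direct summand of $N$ by the relative Goldie Rickart hypothesis. The key structural observation is that $Z_2$ commutes with direct sums and direct products, i.e. $Z_2(\bigoplus_i M_i) = \bigoplus_i Z_2(M_i)$ and $Z_2(\prod_i M_i) = \prod_i Z_2(M_i)$; this lets me compute that for $f = (f_i) : N \to \bigoplus_i M_i$ (respectively $\prod_i M_i$), an element $n \in N$ lands in $Z_2$ of the target precisely when $f_i(n) \in Z_2(M_i)$ for every $i$, giving $f^{-1}(Z_2) = \bigcap_{i\in\mathcal{I}} f_i^{-1}(Z_2(M_i))$.

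First I would handle the forward direction (only if) in each part, which is immediate: each projection composed with a map $N \to M_i$ is a map to a single $M_i$, so being $\bigoplus M_i$-Goldie Rickart (or $\prod M_i$-Goldie Rickart) forces $N$ to be $M_i$-Goldie Rickart by restricting; concretely, given $g : N \to M_i$, extend it by zero into the sum to get $f : N \to \bigoplus M_j$ with $f^{-1}(Z_2) = g^{-1}(Z_2(M_i))$, so the latter is a direct summand. For the converse in part (1), I compute $f^{-1}(Z_2(\bigoplus M_i)) = \bigcap_{i\in\mathcal{I}} f_i^{-1}(Z_2(M_i))$ as above; each $f_i^{-1}(Z_2(M_i))$ is a direct summand of $N$ containing $Z_2(N)$ (using the remark after Definition \ref{relative def} that $Z_2(N) \leq f_i^{-1}(Z_2(M_i))$), and since $\mathcal{I}$ is finite I may intersect finitely many such summands using the ordinary summand intersection property hypothesis to conclude the intersection is a direct summand of $N$.

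For parts (2) and (3) the only change is that $\mathcal{I}$ is now arbitrary, so the intersection $\bigcap_{i\in\mathcal{I}} f_i^{-1}(Z_2(M_i))$ ranges over possibly infinitely many summands; this is exactly why the hypothesis is upgraded to the \emph{strong} summand intersection property (intersection of any family of summands is a summand), applied to the family of summands $\{f_i^{-1}(Z_2(M_i))\}_{i\in\mathcal{I}}$, all of which contain $Z_2(N)$. Part (3) is identical to part (2) except that $f : N \to \prod_i M_i$ and I use $Z_2(\prod_i M_i) = \prod_i Z_2(M_i)$ in place of the direct-sum identity; the pullback computation $f^{-1}(Z_2(\prod M_i)) = \bigcap_i f_i^{-1}(Z_2(M_i))$ is formally the same, where now $f_i = \pi_i f$ with $\pi_i$ the canonical projection.

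The main obstacle I anticipate is verifying the identity $Z_2(\bigoplus_i M_i) = \bigoplus_i Z_2(M_i)$ and its product analogue cleanly, since $Z_2$ is defined via the two-step singular construction $Z(M/Z(M)) = Z_2(M)/Z(M)$ rather than as a single pullback; I would reduce this to the well-known fact that $Z$ commutes with arbitrary direct sums and products together with the observation that forming the quotient $M/Z(M)$ also distributes over the sum/product, then iterate once. A secondary point of care is that in part (3) the containment of each $f_i^{-1}(Z_2(M_i))$ over $Z_2(N)$ must be checked so that the strong summand intersection property, which is only assumed for summands containing $Z_2(N)$, genuinely applies — but this follows directly from the remark noting $Z_2(N) \leq h^{-1}(Z_2(\cdot))$ for any homomorphism $h$ out of $N$.
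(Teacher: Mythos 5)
Your treatment of parts (1) and (2) is essentially the paper's own argument: decompose $f$ into components $f_i=\pi_i f$, use $Z_2(\bigoplus_i M_i)=\bigoplus_i Z_2(M_i)$ (valid for direct sums, since only finitely many coordinates are nonzero and a finite intersection of essential right ideals is essential) to obtain $f^{-1}(Z_2(\bigoplus_i M_i))=\bigcap_i f_i^{-1}(Z_2(M_i))$, note that each term is a direct summand of $N$ containing $Z_2(N)$, and invoke the (strong) summand intersection property. Those two parts, and the necessity directions throughout, are correct.

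Part (3), however, rests on a false identity. You assert that $Z$ and $Z_2$ commute with arbitrary direct products, i.e.\ $Z_2(\prod_i M_i)=\prod_i Z_2(M_i)$. This fails: the paper's own Example \ref{ornek} has $Z(\prod_{p}\Bbb Z_p)=Z_2(\prod_{p}\Bbb Z_p)=\bigoplus_{p}\Bbb Z_p$, whereas $\prod_{p} Z_2(\Bbb Z_p)=\prod_{p}\Bbb Z_p$; the underlying reason is that an infinite intersection of essential right ideals need not be essential, so an element all of whose coordinates are singular need not be singular. In general only the inclusion $Z_2(\prod_i M_i)\subseteq\prod_i Z_2(M_i)$ holds, hence only $f^{-1}(Z_2(\prod_i M_i))\subseteq\bigcap_i f_i^{-1}(Z_2(M_i))$. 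Your computation therefore identifies $f^{-1}(\prod_i Z_2(M_i))$, which can be strictly larger than the submodule $f^{-1}(Z_2(\prod_i M_i))$ whose summand-ness is actually to be proved, and the sufficiency direction of (3) does not follow. (The paper itself only says (3) is ``proved similar to (1),'' so it gives no hint of how to bridge this discrepancy; but as written your argument for (3) has a genuine gap at exactly this step and needs a separate idea.) The necessity direction of (3) is unaffected, since $Z_2(\prod_j M_j)\cap M_i=Z_2(M_i)$ does hold.
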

\begin{proof} (1) Let $\mathcal{I} = \{1, 2,..., n\}$. The necessity is clear from Theorem \ref{relative}. For the
sufficiency, assume that $N$ is $M_i$-Goldie Rickart for all $i\in
\mathcal{I}$ and $f\in $ Hom$_R(N, \bigoplus \limits_{i\in
\mathcal{I}} M_i)$. Let $\pi_i$ denote the natural projection form
$\bigoplus \limits_{i\in \mathcal{I}} M_i$ to $M_i$ for every
$i\in \mathcal{I}$. Then $f=(\pi_1f, \dots , \pi_nf)$. It can be
shown that $f^{-1}(Z_2(\bigoplus \limits_{i\in \mathcal{I}}
M_i))=\bigcap\limits_{i\in I} (\pi_if)^{-1}(Z_2(M_i))$. By
assumption, $(\pi_if)^{-1}(Z_2(M_i))$ is a direct summand of $N$
for each $i\in \mathcal{I}$. Since $(\pi_if)^{-1}(Z_2(M_i))$
contains $Z_2(N)$ for each $i\in \mathcal{I}$, by hypothesis,
$f^{-1}(Z_2(\bigoplus \limits_{i\in \mathcal{I}} M_i))$ is a
direct summand of $N$, as desired.

(2) and (3) are proved similar to (1).
\end{proof}

\begin{cor}\label{ab} Let $\{M_i\}_{i\in\mathcal{I}}$ be a class of $R$-modules
where $\mathcal{I} = \{1, 2,..., n\}$. Then for every $j\in
\mathcal{I}$, $M_j$ is $\bigoplus \limits_{i\in \mathcal{I}}
M_i$-Goldie Rickart if and only if it is $M_i$-Goldie Rickart for
all $i\in \mathcal{I}$.
\end{cor}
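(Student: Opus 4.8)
The plan is to read this corollary off as a direct specialization of Proposition \ref{SIP-relative}(1), applied with $N = M_j$, once the summand-intersection hypothesis of that proposition has been verified. The one subtlety is that Proposition \ref{SIP-relative}(1) requires $N$ to possess the summand intersection property for direct summands containing $Z_2(N)$, and here that property must be produced rather than assumed.

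For the forward implication, suppose $M_j$ is $\bigoplus_{i\in\mathcal{I}} M_i$-Goldie Rickart. I would invoke Theorem \ref{relative} with the roles $M = M_j$ and $N = \bigoplus_{i\in\mathcal{I}} M_i$: taking the direct summand $M_1 = M_j$ of itself together with the submodule $N_1 = M_i$ of $\bigoplus_{i\in\mathcal{I}} M_i$, the theorem immediately yields that $M_j$ is $M_i$-Goldie Rickart for each $i\in\mathcal{I}$. This half is a single citation with no computation.

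For the converse, assume $M_j$ is $M_i$-Goldie Rickart for every $i\in\mathcal{I}$. The key observation is that the instance $i = j$ says $M_j$ is $M_j$-Goldie Rickart, which by the remark following Definition \ref{relative def} means precisely that $M_j$ is a Goldie Rickart module. Proposition \ref{sumint} then guarantees that $M_j$ has the summand intersection property for direct summands which contain $Z_2(M_j)$. Since $\mathcal{I}$ is finite, both hypotheses of Proposition \ref{SIP-relative}(1) are now satisfied with $N = M_j$, and that proposition delivers the conclusion that $M_j$ is $\bigoplus_{i\in\mathcal{I}} M_i$-Goldie Rickart.

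The only point that requires genuine attention is the step in the previous paragraph: one must notice that the self-relative case $i=j$ upgrades $M_j$ to a \emph{bona fide} Goldie Rickart module, and that this is exactly what unlocks Proposition \ref{sumint} and hence the summand intersection property needed to invoke Proposition \ref{SIP-relative}(1). Beyond spotting this, the argument is a chain of direct citations, so there is no computational obstacle to overcome.
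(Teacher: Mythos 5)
Your proposal is correct and follows essentially the same route as the paper: necessity via Theorem \ref{relative}, and sufficiency by using the $i=j$ case to see that $M_j$ is Goldie Rickart, hence has the summand intersection property for direct summands containing $Z_2(M_j)$ by Proposition \ref{sumint}, and then applying Proposition \ref{SIP-relative}(1). Your explicit remark that the self-relative case is what unlocks Proposition \ref{sumint} is precisely the step the paper's proof relies on.
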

\begin{proof} The necessity is clear from Theorem \ref{relative}.
For the sufficiency, let $j\in \mathcal{I}$ and $M_j$ be an
$M_i$-Goldie Rickart module for all $i\in \mathcal{I}$. Then $M_j$
is Goldie Rickart, and so it has the summand intersection property
for direct summands which contain $Z_2(M_j)$ by Proposition
\ref{sumint}. Thus the rest is clear from Proposition
\ref{SIP-relative}(1).
\end{proof}

\begin{thm}\label{relative inj}  Let $\{M_i\}_{i\in\mathcal{I}}$ be a class of $R$-modules where
$\mathcal{I} = \{1, 2,..., n\}$. Assume that $M_i$ is
$M_j$-injective for all $i<j\in \mathcal{I}$. Then for any
$R$-module $N$, $\bigoplus \limits_{i\in \mathcal{I}} M_i$ is an
$N$-Goldie Rickart module if and only if $M_i$ is $N$-Goldie
Rickart for all $i\in \mathcal{I}$.
\end{thm}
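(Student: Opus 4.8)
=== PROOF PROPOSAL ===

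The plan is to reduce the statement to the two-module case $n=2$ and then to combine the relative-injectivity hypothesis with the splitting machinery already developed. Write $M = M_1 \oplus M_2$ with $M_1$ being $M_2$-injective, and let $N$ be arbitrary. I would first dispose of the necessity: if $M$ is $N$-Goldie Rickart, then each $M_i$, being a direct summand of $M$, is $N$-Goldie Rickart by Theorem \ref{relative} (taking $N_1 = N$ there, after swapping the roles so that the direct summand sits on the domain side). So the substance is the sufficiency direction.

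For sufficiency, fix a homomorphism $f\colon N \rightarrow M_1\oplus M_2$ and write $f = (f_1, f_2)$ with $f_i = \pi_i f\colon N \rightarrow M_i$, where $\pi_i$ is the projection. The first key step is the identity $f^{-1}(Z_2(M)) = f_1^{-1}(Z_2(M_1)) \cap f_2^{-1}(Z_2(M_2))$, which holds because $Z_2(M_1\oplus M_2) = Z_2(M_1)\oplus Z_2(M_2)$ and an element lands in the Goldie torsion submodule coordinatewise. Since $M_i$ is $N$-Goldie Rickart by hypothesis, each $f_i^{-1}(Z_2(M_i))$ is a direct summand of $N$ containing $Z_2(N)$. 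The problem thus becomes: the intersection of two such direct summands must again be a direct summand. This is exactly the summand intersection property for summands containing $Z_2(N)$, and it is \emph{not} automatic — it is where the injectivity hypothesis must enter.

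The main obstacle, and the heart of the proof, is establishing that this particular intersection splits off. I expect the argument to route through the relative-injectivity assumption on $M_1$ rather than any property of $N$ itself: the idea is to use that $M_1$ being $M_2$-injective forces a good splitting of homomorphisms into $M$, so that $f$ can be adjusted (by an automorphism of $M$ fixing the torsion structure, or by a Baer-type diagonalization) to a form where the two preimages are ``independent'' and their intersection is visibly a summand. Concretely, I would try to lift the inclusion data across the injective $M_1$ to produce an idempotent in $\mathrm{End}_R(N)$ whose image is $f^{-1}(Z_2(M))$, mirroring how relative injectivity is used in \cite{LRR1} to pass Rickart-type properties through direct sums. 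One then invokes Theorem \ref{goldie} to conclude: writing $N = Z_2(N)\oplus N'$ with $N'$ nonsingular Rickart, the splitting of the preimage reduces to a kernel computation in $N'$, where the relative-$C_2$/injective machinery of the earlier sections applies.

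Finally I would complete the induction on $n$. Assuming $M_i$ is $M_j$-injective for all $i<j$, group $M = M_1 \oplus (M_2\oplus\cdots\oplus M_n)$; the tail $M_2\oplus\cdots\oplus M_n$ satisfies the same injectivity hypotheses on its indices, so by the inductive hypothesis it is $N$-Goldie Rickart, and $M_1$ is injective relative to each $M_j$ with $j>1$, hence (one checks) relative to the tail. The two-module case then upgrades the pair $\bigl(M_1,\ M_2\oplus\cdots\oplus M_n\bigr)$ to the conclusion that $M$ is $N$-Goldie Rickart. The care point in the induction is verifying that $M_1$ remains injective relative to the \emph{direct sum} $M_2\oplus\cdots\oplus M_n$, which follows from the standard fact that relative injectivity passes to finite direct sums in the second variable.
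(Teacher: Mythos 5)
There is a genuine gap, and it begins at the very first line of your sufficiency argument: you have the arrow pointing the wrong way. By Definition \ref{relative def}, ``$M$ is $N$-Goldie Rickart'' quantifies over homomorphisms $f: M\rightarrow N$ and asks that $f^{-1}(Z_2(N))$ be a direct summand of the \emph{domain} $M$. To prove that $M_1\oplus M_2$ is $N$-Goldie Rickart you must therefore take $f: M_1\oplus M_2\rightarrow N$ and split $f^{-1}(Z_2(N))$ off from $M_1\oplus M_2$. You instead take $f: N\rightarrow M_1\oplus M_2$, decompose it through the projections, and reduce to the intersection identity $f^{-1}(Z_2(M))=f_1^{-1}(Z_2(M_1))\cap f_2^{-1}(Z_2(M_2))$ inside $N$ --- but that is the setup for the statement ``$N$ is $\bigoplus_i M_i$-Goldie Rickart,'' which is a different result (it is Proposition \ref{SIP-relative}, where the operative hypothesis is a summand intersection property on $N$, not relative injectivity among the $M_i$). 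In particular your claim that each $f_i^{-1}(Z_2(M_i))$ is a direct summand of $N$ because ``$M_i$ is $N$-Goldie Rickart'' does not follow: that conclusion would require $N$ to be $M_i$-Goldie Rickart.

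In the correct setup, with $f_i=f\iota_{M_i}: M_i\rightarrow N$ and $M_i=f_i^{-1}(Z_2(N))\oplus L_i$, one only gets the inclusion $f_1^{-1}(Z_2(N))\oplus f_2^{-1}(Z_2(N))\subseteq f^{-1}(Z_2(N))$; there is a genuine cross-term piece $X=f^{-1}(Z_2(N))\cap(L_1\oplus L_2)$ which need not decompose along $L_1$ and $L_2$, and the entire content of the theorem is splitting $X$ off from $L_1\oplus L_2$. This is exactly where the hypothesis that $M_1$ is $M_2$-injective enters: since $X\cap L_1=0$ and $L_1$ is $L_2$-injective, there is a complement $K$ of $L_1$ in $L_1\oplus L_2$ with $X\subseteq K$ and $K\cong L_2$; then $K$ is $N$-Goldie Rickart and $X=(f_{\mid_{K}})^{-1}(Z_2(N))$ is a direct summand of $K$, hence of $M_1\oplus M_2$. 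Your paragraph on how injectivity ``should'' enter (adjusting $f$ by an automorphism, or producing an idempotent in $\mathrm{End}_R(N)$) is a placeholder rather than an argument, and it again locates the idempotent in the wrong ring --- it must live in $\mathrm{End}_R(M_1\oplus M_2)$. Your necessity direction and the final induction scheme are fine, but the two-module case, which is the whole theorem, is not established.
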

\begin{proof} Let $N$ be an $R$-module. The necessity is clear from Theorem \ref{relative}.
For the sufficiency, assume $M_i$ is $N$-Goldie Rickart for all
$i\in \mathcal{I}$. Suppose that $n=2$ and $f\in $
Hom$_R(M_1\oplus M_2, N)$. Then $f\iota_{M_i}\in $ Hom$_R(M_i, N)$
where $\iota_{M_i}:M_i\rightarrow M_1\oplus M_2$ is the inclusion
map, say $f_i=f\iota_{M_i}$ for $i=1, 2$. Hence
$M_1=f_1^{-1}(Z_2(N))\oplus L_1$ and $M_2=f_2^{-1}(Z_2(N))\oplus
L_2$ for some submodules $L_1$ of $M_1$ and $L_2$ of $M_2$. Since
$f_1^{-1}(Z_2(N))\oplus f_2^{-1}(Z_2(N)) \subseteq
f^{-1}(Z_2(N))$, we have $f^{-1}(Z_2(N))=f_1^{-1}(Z_2(N))\oplus
f_2^{-1}(Z_2(N))\oplus [f^{-1}(Z_2(N))\cap (L_1\oplus L_2)]$.  By
Theorem \ref{relative}, $L_i$ is $N$-Goldie Rickart for $i=1, 2$.
Note that $f_{\mid_{L_1\oplus
L_2}}^{-1}(Z_2(N))=f^{-1}(Z_2(N))\cap (L_1\oplus L_2)$. Since
$L_1$ is $L_2$-injective and $L_1\cap [f^{-1}(Z_2(N))\cap
(L_1\oplus L_2)]=0$, there exists a submodule $K$ of $L_1\oplus
L_2$ such that $L_1\oplus L_2=L_1\oplus K$ and $f^{-1}(Z_2(N))\cap
(L_1\oplus L_2)\subseteq K$. Clearly,
$f_{\mid_{K}}^{-1}(Z_2(N))=f^{-1}(Z_2(N))\cap (L_1\oplus L_2)$.
Also since $K\cong L_2$,  $K$ is $N$-Goldie Rickart. Thus
$f_{\mid_{K}}^{-1}(Z_2(N))$ is a direct summand of $K$. It follows
that $f^{-1}(Z_2(N))\cap (L_1\oplus L_2)$ is a direct summand of
$L_1\oplus L_2$. This implies that $f^{-1}(Z_2(N))$ is a direct
summand of $M_1\oplus M_2$. So $M_1\oplus M_2$ is $N$-Goldie
Rickart. Now suppose that $\bigoplus \limits_{i=1}^{n-1} M_i$ is
$N$-Goldie Rickart and we show  $\bigoplus \limits_{i=1}^{n} M_i$
is also $N$-Goldie Rickart. Since $\bigoplus \limits_{i=1}^{n-1}
M_i$ is $M_n$-injective and $M_n$ is $N$-Goldie Rickart, by the
preceding discussion, $\bigoplus \limits_{i=1}^{n} M_i$ is an
$N$-Goldie Rickart module. This completes the proof by the
induction on $n$.
\end{proof}

We conclude this paper by presenting a result on Goldie Rickart
property of $\bigoplus \limits_{i\in \mathcal{I}} M_i$ apart from
Theorem \ref{iff}.

\begin{cor} Let $\{M_i\}_{i\in\mathcal{I}}$ be a class of $R$-modules where
$\mathcal{I} = \{1, 2,..., n\}$. Assume that $M_i$ is
$M_j$-injective for all $i<j\in \mathcal{I}$. Then $\bigoplus
\limits_{i\in \mathcal{I}} M_i$ is a Goldie Rickart module if and
only if $M_i$ is $M_j$-Goldie Rickart for all $i, j\in
\mathcal{I}$.
\end{cor}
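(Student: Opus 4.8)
The plan is to chain the two immediately preceding results, Theorem \ref{relative inj} and Corollary \ref{ab}, together with the elementary observation recorded just after Definition \ref{relative def} that a module is Goldie Rickart precisely when it is relatively Goldie Rickart to itself. Write $M = \bigoplus_{i\in\mathcal{I}} M_i$. First I would rephrase the left-hand side of the stated equivalence: $M$ is Goldie Rickart if and only if $M$ is $M$-Goldie Rickart.

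Since the standing hypothesis that $M_i$ is $M_j$-injective for all $i<j\in\mathcal{I}$ is exactly the hypothesis of Theorem \ref{relative inj}, I would apply that theorem with the test module taken to be $N=M$ itself. This yields that $M=\bigoplus_{i\in\mathcal{I}} M_i$ is $M$-Goldie Rickart if and only if each $M_i$ is $M$-Goldie Rickart, that is, $M_i$ is $\bigoplus_{j\in\mathcal{I}} M_j$-Goldie Rickart for every $i\in\mathcal{I}$. Next I would invoke Corollary \ref{ab}, which for the finite index set $\mathcal{I}$ (and with no injectivity assumption) asserts that $M_i$ is $\bigoplus_{j\in\mathcal{I}} M_j$-Goldie Rickart if and only if $M_i$ is $M_j$-Goldie Rickart for all $j\in\mathcal{I}$. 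Running this last equivalence over every $i\in\mathcal{I}$ converts the intermediate condition into ``$M_i$ is $M_j$-Goldie Rickart for all $i,j\in\mathcal{I}$'', which is precisely the right-hand side. Stringing the three equivalences together completes the proof.

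There is essentially no computational obstacle here, as the corollary is a purely formal consequence of the two previous statements; the necessity direction is in any case immediate from Theorem \ref{relative}. The only point warranting a moment's care is the legitimacy of feeding $N=M=\bigoplus_{j\in\mathcal{I}} M_j$ back into Theorem \ref{relative inj}: one must note that the relative-injectivity hypothesis there constrains only the summands $M_i$ being assembled, via $M_i$ is $M_j$-injective for $i<j$, and imposes no condition whatsoever on the test module $N$. Hence choosing $N$ to be the direct sum itself is permitted, and with that observation the equivalence follows at once.
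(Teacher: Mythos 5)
Your proposal is correct and follows essentially the same route as the paper: both arguments reduce ``Goldie Rickart'' to ``$M$-Goldie Rickart'' for $M=\bigoplus_{i\in\mathcal{I}}M_i$, then chain Corollary \ref{ab} with Theorem \ref{relative inj} applied to the test module $N=M$. The only cosmetic difference is that you run the two cited results as a single chain of equivalences, whereas the paper handles necessity separately via Theorem \ref{relative}.
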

\begin{proof} The necessity is true by Theorem \ref{relative}.
For the sufficiency, assume that $M_i$ is $M_j$-Goldie Rickart for
all $i, j \in \mathcal{I}$. Due to Corollary \ref{ab}, $M_i$ is
$\bigoplus \limits_{i\in \mathcal{I}} M_i$-Goldie Rickart.
According to Theorem \ref{relative inj}, $\bigoplus \limits_{i\in
\mathcal{I}} M_i$ is also  $\bigoplus \limits_{i\in \mathcal{I}}
M_i$-Goldie Rickart, hence it is Goldie Rickart.
\end{proof}

\end{document}